\def\dt{\left.\frac{d}{dt}\right|_{_{t=0}}}
\def\a{\alpha}
\def\b{\beta}
\def\d{\delta}
\def\D{\Delta}
\def\Id{\mathop{\rm Id}\nolimits}
\def\ot{\otimes}
\def\wdots{\wedge\dots\wedge}
\def\rt{\triangleright}
\def\lt{\triangleleft}
\def\D{\Delta}
\def\Id{\mathop{\rm Id}\nolimits}
\def\ad{\mathop{\rm ad}\nolimits}
\newcommand{\G}[1]{\mathfrak{#1}}
\newcommand{\C}[1]{\mathcal{#1}}
\newcommand{\B}[1]{\mathbb{#1}}
\newcommand{\Hom}{{\rm Hom}}
\renewcommand{\leq}{\leqslant}
\renewcommand{\geq}{\geqslant}
\numberwithin{equation}{section}
\newtheorem{theorem}{Theorem}[section]
\newtheorem{proposition}[theorem]{Proposition}
\newtheorem{corollary}[theorem]{Corollary}
\theoremstyle{definition}
\newtheorem{remark}[theorem]{Remark}
\newtheorem{example}[theorem]{Example}
\title{Cohomologies and generalized derivation extensions of $n$-Lie algebras}
\author{B. Ate\c{s}l\.{i}}
\address{Gebze Technical University, Kocaeli, Turkey}
\email{b.atesli@gtu.edu.tr}
\author{O. Esen}
\address{Gebze Technical University, Kocaeli, Turkey}
\email{oesen@gtu.edu.tr}
\author{S. Sütlü}
\address{I\c{s}ik University, Istanbul, Turkey}
\email{serkan.sutlu@isikun.edu.tr}
\begin{document}

\begin{abstract}
A cohomology theory, associated to a $n$-Lie algebra and a representation space of it, is introduced. It is observed that this cohomology theory is qualified to encode the generalized derivation extensions, and that it coincides, for $n=3$, with the known cohomology of $n$-Lie algebras. The abelian extensions and infinitesimal deformations of $n$-Lie algebras, on the other hand, are shown to be characterized by the usual cohomology of $n$-Lie algebras. Furthermore, the Hochschild-Serre spectral sequence of the Lie algebra cohomology is upgraded to the level of $n$-Lie algebras, and is applied to the cohomology of generalized derivation extensions. 
\end{abstract}

\maketitle

\tableofcontents

\section*{Introduction}

In an effort to generalize the classical Hamiltonian mechanics to a theory that can accommodate two Hamiltonians, ternary Lie algebras were introduced in \cite{Namb73}, wherein it was hinted that the theory may be generalized into one that allows $(n-1)$-many Hamiltonians. This point of view was pursued further in \cite{Fili85} introducing a theory of $n$-Lie algebras, and then has further been developed in \cite{Takh94}. 

It was in \cite{Gaut96}, for the first time, a cohomology theory has been associated to $n$-ary Lie algebras to study their formal deformations. Then, in \cite{daletskii1997leibniz}, this cohomology theory has been considered further, in close connection with the Leibniz cohomology. This cohomology theory, associated to $n$-Lie algebras, upgraded recently to a cohomology theory with arbitrary coefficients in \cite{bai2016bialgebras} and \cite{Zhan14} in the case of $n=3$, and in \cite{SongTang19} to a cohomology theory associated to a $n$-Hom-Lie algebra along with a representation space of it.

On the other hand, cohomology theories, more precisely the low dimensional cohomology groups, has intimate relations with the extensions/deformations of the algebraic/geometric objects, \cite{HochSerr53-I,HochSerr53}. Regarding the $n$-ary Lie algebras, this point of view has been recently taken up by \cite{Zhan14} both for the abelian extensions and the infinitesimal deformations, in the case of $n=3$. In the general case of $n$-Lie algebras, the analogue results were obtained in \cite{AzcaIzqu10}, see also \cite[Prop. 4.3]{Makh16}, for the central extensions, and in \cite[Thm. 4.2]{Makh16} for (1-parameter) formal deformations. We shall hereby consider the cases of abelian extensions of $n$-Lie algebras, by a representation space, and the infinitesimal deformations.

The case of derivation extensions, on the other hand, has been considered in \cite{SongJian18} for 3-Lie algebras, and it is argued that a derivation of a 3-Lie algebra $\G{G}$ does not fit to define a 3-Lie algebra structure on the 1-dimensional extension $\G{G}\oplus k$. The authors, therefore, introduced the notion of a generalized derivation for a 3-Lie algebra, which has later been extended to $n$-Hom-Lie algebras in \cite{SongTang19}. 

It is very well-known that the derivations on $n$-Lie algebras, Leibniz algebras, etc. are in one-to-one correspondence with the 1st cohomology group of this algebraic object in question, with coefficients in itself. However, as is pointed out in \cite{SongJian18} for 3-Lie algebras, such a characterization is missing for the case of generalized derivations. It is this gap that the present paper aims to fill in. More precisely, we realize a generalized derivations on a $n$-Lie algebra as a 1-cocycle in a cohomology theory associated to the very $n$-Lie algebra (along with a representation space) under consideration. Surprisingly enough, we observe that the cohomology theory classifying the generalized derivations on $n$-Lie algebras is not the usual one that classify the abelian extensions or the infinitesimal deformations. Nevertheless, these two cohomology theories coincides (only) in the case of $n=3$.

Finally, we upgrade the Hochschild-Serre spectral sequence for Lie algebras to $n$-Lie algebras, allowing thus a thorough homological analysis on $n$-Lie algebras. We present explicitly the $E_1$-term relative to a subalgebra, and then the $E_2$-term relative to an ideal. As a manifestation of the structure of the cohomology of $n$-Lie algebras, the spectral sequence bears the Leibniz cohomology in addition to the $n$-Lie algebra cohomology. Furthermore, using this spectral sequence we show that the cohomology of the generalized derivation extension of a $n$-Lie algebra coincides with the cohomology of the $n$-Lie algebra itself.

The paper may be outlined as follows.

The very first section consists of the preliminary material to be used in the sequel. More precisely, in Subsection \ref{subsect-Lie-alg} we recall the derivation extensions of Lie algebras with the Lie algebra cohomology. In Subsection \ref{subsect-Leibniz}, on the other hand, we recall briefly the representations and the cohomology of Leibniz algebras. In Section \ref{sect-n-Lie} we present the main results of the paper. Following the auxiliary results on the representations of $n$-Lie algebras in Subsection \ref{subsect-n-Lie-alg}, and the $n$-Lie algebra cohomology in Subsection \ref{subsect-cohom-n-Lie}, we discuss the abelian extensions of $n$-Lie algebras in Proposition \ref{prop-ab-ext} and Proposition \ref{prop-ab-ext-II} of Subsection \ref{subsect-ab-ext}, along with the infinitesimal deformations of $n$-Lie algebras in Subsection \ref{subsect-inf-deform}. Then, in Subsection \ref{subsect-gen-der-n-Lie}, we introduce the cohomology theory encoding the generalized derivation extensions of $n$-Lie algebras. More precisely, we prove Proposition \ref{prop-gen-der-1-cocycle-n-Lie}. Finally, Section \ref{sect-Hoch-Serr-spect-seq} is reserved for the Hochschild-Serre spectral sequence for the $n$-Lie algebra cohomology. As a direct application, we obtain Corollary \ref{coroll-cohom-gen-der-ext} on the cohomology of a generalized derivation extension of a $n$-Lie algebra.

\subsubsection*{Notations and conventions}~

Given a linear space $V$, we shall denote by $g\ell(V)$ the space of linear maps on $V$. On the other hand, given a $n$-Lie algebra $\C{L}$, we shall employ the notations $\C{L}_{n} := \wedge^{n}\C{L}$, $\C{L}_{n-1} := \wedge^{n-1}\C{L}$, and $\C{L}_{n-2} := \wedge^{n-2}\C{L}$.


\section{Lie and Leibniz algebras}\label{sect-Lie-Leibniz}

This introductory section is meant to review the basics of Lie and Leibniz algebras which will be needed in the sequel. To be more precise, we shall take a quick tour towards the cohomology of Lie algebras in the first subsection, whereas the second subsection will be reserved for the representations and the cohomology of Leibniz algebras. 

\subsection{Derivation extensions of Lie algebras}~\label{subsect-Lie-alg}

In the present subsection we shall present a brief overview of derivation extensions of Lie algebras, through the Lie algebra cohomology. 

Let us begin with the Lie algebra cohomology from \cite{HochSerr53}, as well as \cite{Fuks-book}. Let $\G{g}$ be a Lie algebra, and let $V$ be a (left) representation of $\G{g}$. Then, the \emph{Chevalley-Eilenberg complex}
\[
C(\G{g},V) := \bigoplus_{n\geq 0} \,C^n(\G{g},V), \qquad C^n(\G{g},V):= {\rm Hom}\left(\wedge^n\,\G{g}, V\right),
\]
together with $d:C^n(\G{g},V) \to C^{n+1}(\G{g},V)$ which is given for any $f\in C^n(\G{g},V)$ by
\begin{align*}
& df(X_1,\ldots,X_{n+1}) := \\
& \sum_{1\leq i < j \leq n+1}\,(-1)^{i+j}\,f([X_i,X_j], X_1,\ldots, \widehat{X}_i, \ldots, \widehat{X}_j, \ldots, X_{n+1}) + \\
& \sum_{1\leq k \leq n+1}\,(-1)^{k+1}\,X_k\rt f(X_1,\ldots, \widehat{X}_k,\ldots, X_{n+1}),
\end{align*}
form a differential complex. The homology $H(C(\G{g},V), d)$ is called the \emph{Lie algebra cohomology} of $\G{g}$, with coefficients in $V$, and is denoted by $H^\ast(\G{g},V)$. In other words,
\[
Z^n(\G{g},V) = \{f\in C^n(\G{g},V) \mid df = 0\}
\]
being the set of $n$-cocycles, and 
\[
B^n(\G{g},V) = \{f\in C^n(\G{g},V) \mid f = dg, \,\, {\rm for \,\,some}\,\, g\in C^{n-1}(\G{g},V)\}
\]
being the set of $n$-coboundaries,
\[
H^n(\G{g},V) := Z^n(\G{g},V) / B^n(\G{g},V), \qquad n\geq 0.
\]

Now, a \emph{derivation} of a Lie algebra $\G{g}$ is defined to be a linear operator $D:\G{g}\to \G{g}$ so that
\[
D([X,Y]) = [D(X),Y] + [X,D(Y)]
\]
for any $X,Y \in \G{g}$. It follows at once from the \emph{Jacobi identity}; 
\[
[X,[Y,Z]] + [Y,[Z,X]] + [Z,[X,Y]] = 0
\]
for all $X,Y,Z \in \G{g}$, that the \emph{adjoint action} yields a derivation $\ad_X:\G{g}\to \G{g}$, $\ad_X(Y):=[X,Y]$, for any $X,Y\in \G{g}$. Given a derivation $D:\G{g}\to \G{g}$, if $D=\ad_X$ for some $X\in \G{g}$, then it is called an \emph{inner derivation}.

Given a Lie algebra $\G{g}$, represented over itself by the adjoint action, it is well-known that $Z^1(\G{g},\G{g})$ coincides with the set $Der(\G{g})$ of derivations on $\G{g}$, while $B^1(\G{g},\G{g})$ represents the set $I(\G{g})$ of inner derivations. Accordingly,
\[
H^1(\G{g},\G{g}) = Z^1(\G{g},\G{g}) / B^1(\G{g},\G{g}) = Der(\G{g}) / I(\G{g}).
\]

Furthermore, given a linear mapping $f:\G{g}\to \G{g}$, the space $\G{g}\oplus_f k:=\G{g}\oplus k$ is a Lie algebra through 
\begin{equation}\label{der-ext}
[(X,\a),(Y,\b)] := \Big([X,Y] + \a f(Y) - \b f(X), \, 0\Big) 
\end{equation}
for any $X,Y\in \G{g}$ and any $\a,\b\in k$, if and only if $f \in Z^1(\G{g},\G{g})$.  

If, on the other hand, $f \in B^1(\G{g},\G{g})$; say $f = dZ = -\ad_Z$ for some $Z\in \G{g}$, then $\G{g}\oplus_{f} k \cong \G{g}\rtimes kZ$.

As a result, following the terminology in \cite[Sect. I.4]{Fuks-book}, the equivalence classes of \emph{1-dimensional right extensions} of a Lie algebra $\G{g}$ are in one-to-one correspondence with $H^1(\G{g},\G{g})$.

\subsection{Representations and the cohomology of Leibniz algebras}~\label{subsect-Leibniz}

In close connection with $n$-Lie algebras, for any $n\geq 2$, there are the Leibniz algebras. As such, we find it instructive to recall briefly the basics of Leibniz algebras, along with the Leibniz algebra cohomology. The concepts recalled here will be needed in the sequel. 

Let us begin with the definition of a (left) Leibniz algebra following \cite{SongJian18}. For the right handed version, we refer the reader to \cite{LodaPira93}. For a recent survey on Leibniz algebras, we refer the reader to \cite{Feld19}.

A  \emph{left Leibniz algebra} is a vector space $\G{L}$ equipped with a bilinear map $[\,,]:\G{L}\otimes \G{L} \to \G{L}$ satisfying
\begin{equation}\label{Leibniz-id-left}
[x,[y,z]] = [[x,y],z] + [y,[x,z]]
\end{equation}
for all $x,y,z \in \G{L}$, whereas a \emph{right Leibniz algebra} is one that
\begin{equation}\label{Leibniz-id-right}
[[x,y],z] = [[x,z],y] + [x,[y,z]]
\end{equation}
is satisfied.

Moreover, a Leibniz algebra is called \emph{symmetric} if it is both a left and a right Leibniz algebra. It follows at once from the definition above that a Lie algebra is a symmetric Leibniz algebra. Other examples of (left) Leibniz algebras include associative algebras equipped with suitable endomorphisms, see for instance \cite[Ex. 1.2]{LodaPira93}.

Following the ideas developed in \cite[Subsect. 1.5]{LodaPira93}, the notion of a representation of a (left) Leibniz algebra is given as in \cite[Subsect. 2.1]{SongJian18}.

A \emph{representation} of a Leibniz algebra $(\G{L}, [\,,])$ is defined to be a vector space $V$ equipped with the linear maps $\lambda:\G{L}\to g\ell(V)$ and $\rho:\G{L} \to g\ell(V) $ such that
\begin{align}
&\lambda([x,y]) =\lambda(x)\circ\lambda(y) - \lambda(y)\circ\lambda(x), \label{rep-Leibniz-I} \\
&\rho([x,y]) = \lambda(x)\circ\rho(y) - \rho(y)\circ\lambda(x), \label{rep-Leibniz-II} \\
&\rho(y) \circ \rho(x) = -\rho(y) \circ \lambda(x), \label{rep-Leibniz-III}\\
\end{align}
for any $x,y\in \G{L}$.

Let us note also that, from the point of view of (left and right) \emph{actions}, we may view $\lambda:\G{L}\to g\ell(V)$ as a left action $\rt:\G{L}\ot V\to V$ by 
\[
x\rt v := \lambda(x)(v),
\]
and $\rho:\G{L} \to g\ell(V)$ as a right action $\lt:V\ot \G{L} \to V$ via
\[
v\lt x := \rho(x)(v),
\]
for any $x,y\in \G{L}$, and any $v\in V$. Accordingly, $\lambda:\G{L} \to g\ell(V)$ may be called a \emph{left representation} of the Leibniz algebra, while $\rho:\G{L} \to g\ell(V)$ is referred as a \emph{right representation}. Hence, the conditions \eqref{rep-Leibniz-I}-\eqref{rep-Leibniz-III} may be re-written as
\begin{align}
& [x,y] \rt v = x\rt (y\rt v) - y\rt (x\rt v), \label{rep-1}\\
& v\lt [x,y] = x\rt (v\lt y) - (x\rt v) \lt y, \label{rep-2} \\
& (v\lt x)\lt y = - (x\rt v)\lt y. \label{rep-3}
\end{align}

Given a Leibniz algebra $\G{L}$, 
\begin{equation}\label{ad-left}
\ad^L:\G{L}\to g\ell(\G{L}), \qquad \ad^L(x)(y) := [x,y]
\end{equation}
is a left representation of the Leibniz algebra $\G{L}$ onto itself, called the \emph{left adjoint representation} of $\G{L}$ on itself, while
\begin{equation}\label{ad-right}
\ad^R:\G{L}\to g\ell(\G{L}), \qquad \ad^R(x)(y) := [y,x]
\end{equation}
is a right representation, called the \emph{right adjoint representation} of $\G{L}$.

We shall make use of the notations
\[
\ad^L_x := \ad^L(x), \qquad \ad^R_x := \ad^R(x)
\]
for any $x\in \G{L}$. 

A representation $V$ of a Leibniz algebra $\G{L}$ is called \emph{symmetric} if
\[
x \rt v + v\lt x = 0
\] 
for any $x\in \G{L}$, and any $v\in V$, and $V$ is called \emph{anti-symmetric} if 
\[
v \lt x = 0.
\]

Given a Leibniz algebra $\G{L}$, and a representation $V$ of $\G{L}$, let 
\[
V_{anti} := {\rm Span}\{x\rt v + v\lt x \mid x\in \G{L},\,\,v\in V\}
\]
be the \emph{anti-symmetric kernel} of $V$, \cite{Feld19,LodaPira93}. Then, the \emph{symmetrization} 
\[
V_{sym} := V / V_{anti}
\]
of $V$, is a symmetric representation of $\G{L}$.

On the other extreme, we may consider the symmetric sub-representations.

\begin{example}\label{ex-rep-sym}
Given a representation $V$ of a Leibniz algebra $\G{L}$,
\[
V^{sym} := \{v \in V\mid x\rt v + v\lt x = 0, \,\, \forall\,x\in \G{L}\}
\]
is also a representation. Indeed, given any $w\in V^{sym}$, and any $x,y\in \G{L}$,
\begin{align*}
& y\rt (x\rt w) + (x\rt w)\lt y = y\rt (x\rt w) + x\rt (w \lt y) - w \lt [x,y] = \\
& y\rt (x\rt w) - x\rt (y \rt w) - w \lt [x,y] = - [x,y]\rt w - w \lt [x,y] = 0,
\end{align*}
that is, $x\rt w \in V^{sym}$. Then,
\begin{align*}
& y\rt (w\lt x) + (w\lt x)\lt y = (y \rt w)\lt x + w\lt [y,x] + (w\lt x) \lt y = \\
& (y \rt w)\lt x - [y,x] \rt w + (w\lt x) \lt y = (y \rt w)\lt x - y\rt (x\rt w) + x\rt (y\rt w) + (w\lt x) \lt y = \\
& - y\rt (x\rt w)  + (w\lt x) \lt y = - y\rt (x\rt w)  - (x\rt w) \lt y = 0,
\end{align*}
where we used \eqref{rep-2} in the first equality, and on the third equality we used the result that $y\rt w \in V^{sym}$. The fourth equality, on the other hand, follows from \eqref{rep-3}, and the last equality is a result of $x\rt w\in V^{sym}$. Accordingly, we conclude that $w\lt x \in V^{sym}$.

Needless to say that $V^{sym}$ is a symmetric representation of $\G{L}$. Furthermore, it follows from its very definition that $V^{sym} = V$ if $V$ is a symmetric representation, and that $V^{sym}$ is trivial if $V$ is an anti-symmetric representation.
\end{example}

Let us next recall the semi-direct sum construction on the Leibniz algebras. Given a Leibniz algebra $\G{L}$, and a representation $V$ of $\G{L}$, it takes a routine verification that $V\rtimes \G{L} := V\oplus \G{L}$ is also Leibniz algebra via
\[
\Big[(v,x),(w,y)\Big] := \Big(x\rt w + v\lt y,\,[x,y]\Big),
\]
called the \emph{semi-direct sum} Leibniz algebra.

Given a (left) Leibniz algebra $\G{L}$, a linear map $D:\G{L} \to \G{L}$ is called a \emph{derivation} if 
\[
D([x,y]) = [D(x),y] + [x,D(y)]
\]
for all $x,y \in \G{L}$.

Given any left Leibniz algebra $\G{L}$, and any $x\in \G{L}$, the (left) adjoint action $\ad_x^L:\G{L} \to \G{L}$ is a derivation.

It then follows at once from the definition that if $\G{L}$ is a left Leibniz algebra, then \eqref{ad-left} is a derivation. Similarly, in the case of $\G{L}$ being a right Leibniz algebra,  \eqref{ad-right} is a derivation.

Let us finally recall the Leibniz algebra cohomology from \cite{FeldWage21,LodaPira93}. Let $\G{L}$ be a Leibniz algebra, and let $V$ be a representation of $\G{L}$. Then, 
\[
CL(\G{L},V) := \bigoplus_{n\geq 0} \,CL^n(\G{L},V), \qquad CL^n(\G{L},V):= {\rm Hom}\left(\G{L}^{\ot\,n}, V\right),
\]
together with $d:CL^n(\G{L},V) \to CL^{n+1}(\G{L},V)$ which is given by
\begin{align}\label{Leibniz-diff}
\begin{split}
& df(x_1,\ldots,x_{n+1}) := \\
& \sum_{1\leq i < j \leq n+1}\,(-1)^{i}\,f(x_1,\ldots, \widehat{x}_i, \ldots, x_{j-1}, [x_i,x_j], \ldots, x_{n+1}) + \\
& \sum_{k=1}^n\,(-1)^{k+1}\,x_k\rt f(x_1,\ldots, \widehat{x}_k,\ldots, x_{n+1}) + (-1)^{n+1}\,f(x_1,\ldots,x_n) \lt x_{n+1}
\end{split}
\end{align}
for any $f\in CL^n(\G{L},V)$, form a differential complex. The homology $H(CL(\G{L},V), d)$ is called the \emph{Leibniz algebra cohomology} of $\G{L}$, with coefficients in $V$, and is denoted by $HL^\ast(\G{L},V)$. Similar to the Lie algebra cohomology complex,
\[
ZL^n(\G{L},V) := \{f\in CL^n(\G{L},V) \mid df = 0\}
\]
denotes the set of $n$-cocycles, and 
\[
BL^n(\G{L},V) := \{f\in CL^n(\G{L},V) \mid f = dg, \,\, {\rm for \,\,some}\,\, g\in CL^{n-1}(\G{L},V)\}
\]
is the set of $n$-coboundaries. Accordingly,
\[
HL^n(\G{L},V) := ZL^n(\G{L},V) / BL^n(\G{L},V), \qquad n\geq 0.
\]

\section{Generalized derivation extensions of $n$-Lie algebras}\label{sect-n-Lie}

In the present section we shall present the cohomological classification of generalized derivations of a $n$-Lie algebras. In the case of $n=2$, the result recovers the analogue result recalled briefly in Subsection \ref{subsect-Lie-alg}, while in the case of $n=3$ it answers the question raised in \cite[Rk. 3.7]{SongJian18}. More precisely, we shall realize a generalized derivation of a $n$-Lie algebra as a 1-cocycle in a cohomology theory associated to $n$-Lie algebras, and an inner generalized derivation as a 1-coboundary. 

\subsection{$n$-Lie algebras}\label{subsect-n-Lie-alg}~

Let us first recall the very basics of $n$-Lie algebras from \cite{daletskii1997leibniz,Namb73,Takh94}.

A linear space $\C{L}$, equipped with a linear operator $[\,,\ldots\,,\,]:\C{L}_n\to \C{L}$, is called an $n$-Lie algebra if it satisfies the \emph{fundamental identity}
\begin{equation}\label{fund-ID-n-Lie}
[x_1,\ldots, x_{n-1}, [x_n,\ldots,x_{2n-1}]] = \sum_{k=0}^{n-1}\, [x_n, \ldots,  [x_1,\ldots,x_{n-1}, x_{n+k}],\ldots, x_{2n-1}],
\end{equation}
for any $x_1,\ldots,x_{2n-1} \in \C{L}$. 

Along the lines of \cite{daletskii1997leibniz}, given an $n$-Lie algebra $\C{L}$, the space $\C{L}_{n-1}$ has the structure of a Leibniz algebra through
\[
[x_1, x_2] := \sum_{k=1}^{n-1}\,x_2^1 \wdots [x_1^1,\ldots, x_1^{n-1},x_2^k] \wdots x_2^{n-1}
\]
for any $x_1:= x_1^1\wdots x_1^{n-1}, x_2:=x_2^1\wdots x_2^{n-1} \in \C{L}_{n-1}$. Furthermore, as was shown in \cite[Thm. 2]{daletskii1997leibniz}, $\C{L}$ happens to be a representation space for the Leibniz algebra $\C{L}_{n-1}$.

\subsubsection{Representations}~

Let us recall the representations of $n$-Lie algebras from \cite{Dzhu04,Dzhu04-II}.

Let $\C{L}$ be a $n$-Lie algebra. A vector space $V$ with a linear map $\mu:\C{L}_{n-1}\to g\ell(V)$ is called a \emph{representation} of $\C{L}$, if
\begin{align}
& \mu(x_1,\ldots,x_{n-1})\mu(x_n,\ldots,x_{2n-2}) - \mu(x_n,\ldots,x_{2n-2})\mu(x_1,\ldots,x_{n-1}) = \label{rep-n-Lie-I}\\
& \hspace{3cm} \sum_{k=0}^{n-2}\,\mu(x_n,\ldots , [x_1,\ldots,x_{n-1},x_{n+k}], \ldots, x_{2n-2}), \notag\\
& \mu([x_1,\ldots,x_n],x_{n+1},\ldots,x_{2n-2}) = \sum_{k=1}^n\,(-1)^k\,\mu(x_1,\ldots, \hat{x}_k,\ldots,x_n)\mu(x_{n+1},\ldots,x_{2n-2},x_k), \label{rep-n-Lie-II}
\end{align}
for any $x_1,\ldots,x_{2n-2}\in \C{L}$.

Let us first note that in the particular case of $n=3$, the conditions \eqref{rep-n-Lie-I} and \eqref{rep-n-Lie-II} reduce precisely to those in, for instance, \cite[Def. 2.3]{bai2016bialgebras}.

As was mentioned in \cite{Dzhu04,Dzhu04-II}, the conditions \eqref{rep-n-Lie-I} and \eqref{rep-n-Lie-II} are equivalent to $V\rtimes \C{L}:=V\oplus \C{L}$ to be a $n$-Lie algebra through
\[
[v_1+x_1, \ldots, v_n+x_n] := \sum_{k=1}^n\,\mu(x_1,\ldots, \hat{x}_k,\ldots,x_n)(v_k)+ [x_1,\ldots, x_n].
\]
In this case, the $n$-Lie algebra $V\rtimes \C{L}$ is called the ``semi-direct sum'' of $\C{L}$ and $V$.

The most immediate example of a representation is $\ad:\C{L}_{n-1}\to g\ell(\C{L})$, that is, the \emph{adjoint representation} of a $n$-Lie algebra $\C{L}$ on itself, which is given by
\begin{equation}\label{ad-n-Lie}
\ad(x_1,\ldots,x_{n-1})(x_n) := [x_1,\ldots,x_n]
\end{equation}
for any $x_1,\ldots,x_n \in \C{L}$.

A slightly more serious example, in the spirit of a dual representation, is the following. To this end, we shall first record the following generalization of \cite[Prop. 2.5(b)]{bai2016bialgebras} for the representations of $n$-Lie algebras.

\begin{proposition}
Given a $n$-Lie algebra $\C{L}$, and a representation $(V,\mu)$ of $\C{L}$, 
\begin{align}\label{property-rep-n-Lie}
\begin{split}
& \sum_{k=1}^{n}\,(-1)^k\,\mu(x_1,\ldots, \widehat{x_k},\ldots,x_n)\mu(x_{n+1},\ldots,x_{2n-2},x_k) + \\
& \hspace{3cm}\sum_{k=1}^{n}\,(-1)^k\,\mu(x_{n+1},\ldots,x_{2n-2},x_k)\mu(x_1,\ldots, \widehat{x_k},\ldots,x_n) = 0
\end{split}
\end{align}
for any $x_1,\ldots, x_{2n-2} \in \C{L}$.
\end{proposition}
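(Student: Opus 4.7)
First, by \eqref{rep-n-Lie-II} the first sum equals $\mu([x_1,\ldots,x_n], x_{n+1},\ldots,x_{2n-2})$, so the claim reduces to proving that the second sum equals $-\mu([x_1,\ldots,x_n], x_{n+1},\ldots,x_{2n-2})$. Set
\[
X_k := x_1\wedge\cdots\wedge\widehat{x_k}\wedge\cdots\wedge x_n, \qquad Y_k := x_{n+1}\wedge\cdots\wedge x_{2n-2}\wedge x_k,
\]
both viewed as elements of $\C{L}_{n-1}$, and use \eqref{rep-n-Lie-I} to swap the order of composition:
\[
\mu(Y_k)\mu(X_k) = \mu(X_k)\mu(Y_k) - \mu\bigl([X_k, Y_k]_L\bigr),
\]
where $[-,-]_L$ denotes the induced Leibniz bracket on $\C{L}_{n-1}$. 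Summing over $k$ with the signs $(-1)^k$ and again recognizing the first piece through \eqref{rep-n-Lie-II}, the claim becomes
\[
\mu\Bigl(\sum_{k=1}^n (-1)^k\,[X_k, Y_k]_L\Bigr) = 2\,\mu\bigl([x_1,\ldots,x_n]\wedge x_{n+1}\wedge\cdots\wedge x_{2n-2}\bigr).
\]

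Second, I expand $[X_k, Y_k]_L$ via the explicit formula of Subsection \ref{subsect-n-Lie-alg}. The ``diagonal'' summand --- the one where the last slot $x_k$ of $Y_k$ is acted upon by $X_k$ --- produces $[X_k, x_k] = (-1)^{n-k}[x_1,\ldots,x_n]$ by the antisymmetry of the $n$-Lie bracket; summed with the sign $(-1)^k$ over $k=1,\ldots,n$ it contributes precisely $n\cdot [x_1,\ldots,x_n]\wedge x_{n+1}\wedge\cdots\wedge x_{2n-2}$ inside $\mu$. The remaining ``off-diagonal'' summands therefore reduce the task to the single operator identity
\[
\sum_{j=1}^{n-2}\sum_{k=1}^n (-1)^k\,\mu\bigl(x_{n+1},\ldots,[x_1,\ldots,\widehat{x_k},\ldots,x_n, x_{n+j}],\ldots,x_{2n-2}, x_k\bigr) = -(n-2)\,\mu\bigl([x_1,\ldots,x_n], x_{n+1},\ldots,x_{2n-2}\bigr).
\]

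Finally, I would establish this last identity by acting both sides on a generic $v\in V$ and using $\mu(y_1,\ldots,y_{n-1})(v) = [y_1,\ldots,y_{n-1}, v]$ inside the $n$-Lie algebra $V\rtimes\C{L}$; the identity then becomes a bracket identity in $V\rtimes\C{L}$ that follows from the fundamental identity \eqref{fund-ID-n-Lie}. For $n=3$ this amounts to the Jacobi-like identity
\[
\sum_{l=1}^{4} (-1)^{l+1}\bigl[[x_1,\ldots,\widehat{x_l},\ldots,x_4],\, x_l,\, w\bigr] = 0, \qquad w\in\C{L},
\]
obtained by adding the two instances of \eqref{fund-ID-n-Lie} applied to $[x_1,x_2,[x_3,x_4,w]]$ and $[x_3,x_4,[x_1,x_2,w]]$ and cancelling the common summand through the cyclic identity $[x_3,x_4,[x_1,x_2,w]] = [[x_1,x_2,w], x_3, x_4]$. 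For general $n$ an analogous iterated strategy applies, but the combinatorial bookkeeping --- tracking cancellations among the many terms and signs generated by several invocations of \eqref{fund-ID-n-Lie} and matching them with the coefficient $-(n-2)$ on the right --- is the main technical obstacle that I expect to require the most care.
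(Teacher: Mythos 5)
Your preliminary reductions are all correct: \eqref{rep-n-Lie-II} does identify the first sum with $\mu([x_1,\ldots,x_n],x_{n+1},\ldots,x_{2n-2})$, the single application of \eqref{rep-n-Lie-I} to the second sum is valid, the ``diagonal'' terms do contribute $n\,\mu([x_1,\ldots,x_n],x_{n+1},\ldots,x_{2n-2})$ with the signs you claim, and your $n=3$ verification of the resulting four-term identity from two instances of \eqref{fund-ID-n-Lie} is complete. The gap is that for general $n$ nothing has been proved. Every step of your reduction is an exact rewriting in which no information is discarded, so the displayed identity with coefficient $-(n-2)$ is \emph{equivalent} to \eqref{property-rep-n-Lie}, not a lemma on the way to it: deferring its proof to ``combinatorial bookkeeping'' defers the entire content of the proposition. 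Moreover, the strategy you sketch for closing it does not obviously generalize. The $n=3$ argument works by pairing $[x_1,x_2,[x_3,x_4,w]]$ with $[x_3,x_4,[x_1,x_2,w]]$ and cancelling the common term, which exploits the coincidence $n+1=2(n-1)$ special to $n=3$; for $n\geq 4$ the $n+1$ entries $x_1,\ldots,\widehat{x_k},\ldots,x_n,x_{n+j},x_k$ appearing in a single summand cannot be split into two blocks of size $n-1$, so there is no analogous pair of fundamental identities whose sum telescopes. A genuinely new combinatorial input is needed at exactly the point you flag as the ``main technical obstacle.''

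For comparison, the paper gets past this point by a different device: it applies \eqref{rep-n-Lie-I} to \emph{both} summands (in opposite directions), re-expands every correction term carrying an inner bracket by means of \eqref{rep-n-Lie-II} so that all terms become products of two $\mu$'s again, and recognizes these products as the left-hand side $F$ of \eqref{property-rep-n-Lie} evaluated at arguments with one $x_a$ ($a\leq n$) transposed with one $x_{n+b}$. This produces the self-referential relation \eqref{counting-formula}, in which the number of summands $n(n-2)$ on the left is strictly smaller than the coefficient $n^2-2$ on the right, and the conclusion $F=0$ is extracted by iterating that relation. Whether or not you adopt this mechanism, some device of comparable strength---a self-referential counting relation, or an honest induction replacing your $n=3$ pairing trick---is required; as it stands your proposal proves the proposition only for $n=3$.
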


\begin{proof}
To begin with, in view of \eqref{rep-n-Lie-I} we have
\begin{align*}
&\sum_{k=1}^{n}\,(-1)^k\,\mu(x_1,\ldots, \widehat{x_k},\ldots,x_n)\mu(x_{n+1},\ldots,x_{2n-2},x_k) + \\
& \hspace{3cm}\sum_{k=1}^{n}\,(-1)^k\,\mu(x_{n+1},\ldots,x_{2n-2},x_k)\mu(x_1,\ldots, \widehat{x_k},\ldots,x_n) = \\
& \sum_{k=1}^{n}\,(-1)^k\,\mu(x_{n+1},\ldots,x_{2n-2},x_k)\mu(x_1,\ldots, \widehat{x_k},\ldots,x_n) + \\
& \sum_{p=1}^{n-2}\sum_{k=1}^{n}\,(-1)^k\,\mu(x_{n+1},\ldots, [x_1,\ldots, \widehat{x_k},\ldots,x_n, x_{n+p}], \ldots, x_{2n-2},x_k) + \\
& \sum_{k=1}^{n}\,(-1)^k\,\mu(x_{n+1},\ldots,x_{2n-2},[x_1,\ldots, \widehat{x_k},\ldots,x_n,x_k]) + \\
& \sum_{k=1}^{n}\,(-1)^k\,\mu(x_1,\ldots, \widehat{x_k},\ldots,x_n)\mu(x_{n+1},\ldots,x_{2n-2},x_k) + \\
& \sum_{1\leq r < k \leq n}\sum_{k=1}^{n}\,(-1)^k\,\mu(x_1,\ldots, [x_{n+1},\ldots,x_{2n-2},x_k,x_r],\ldots, \widehat{x_k},\ldots,x_n) + \\
& \sum_{1\leq k < r\leq n}\sum_{k=1}^{n}\,(-1)^k\,\mu(x_1,\ldots, \widehat{x_k},\ldots, [x_{n+1},\ldots, x_{2n-2},x_k,x_r],\ldots,x_n).
\end{align*} 
That is,
\begin{align*}
& n\,\mu([x_1,\ldots, x_n],x_{n+1},\ldots,x_{2n-2}) + \\
& \sum_{p=1}^{n-2}\sum_{k=1}^{n}\,(-1)^{k+p-1}\,\mu([x_1,\ldots, \widehat{x_k},\ldots,x_n, x_{n+p}], x_{n+1}, \ldots, \widehat{x_{n+p}},\ldots, x_{2n-2},x_k) + \\
& \sum_{1\leq r < k \leq n}\sum_{k=1}^{n}\,(-1)^{k+r-1}\,\mu([x_{n+1},\ldots,x_{2n-2},x_k,x_r], x_1,\ldots, \widehat{x_r},\ldots, \widehat{x_k},\ldots,x_n) + \\
& \sum_{1\leq k < r\leq n}\sum_{k=1}^{n}\,(-1)^{k+r}\,\mu([x_{n+1},\ldots, x_{2n-2},x_k,x_r],x_1,\ldots, \widehat{x_k},\ldots, \widehat{x_r},\ldots,x_n) = 0.
\end{align*}
We next apply \eqref{rep-n-Lie-II} to arrive
\begin{align*}
& n\,\mu([x_1,\ldots, x_n],x_{n+1},\ldots,x_{2n-2}) + \\
& \sum_{1\leq r < k \leq n}\sum_{p=1}^{n-2}\sum_{k=1}^{n}\,(-1)^{r+k+p-1}\,\mu(x_1,\ldots, \widehat{x_r},\ldots, \widehat{x_k},\ldots,x_n, x_{n+p})\mu(x_{n+1}, \ldots, \widehat{x_{n+p}},\ldots, x_{2n-2},x_k,x_r) + \\
& \sum_{1\leq k < r \leq n}\sum_{p=1}^{n-2}\sum_{k=1}^{n}\,(-1)^{r+k+p}\,\mu(x_1,\ldots, \widehat{x_k},\ldots, \widehat{x_r},\ldots,x_n, x_{n+p})\mu(x_{n+1}, \ldots, \widehat{x_{n+p}},\ldots, x_{2n-2},x_k,x_r) + \\
& \sum_{p=1}^{n-2}\sum_{k=1}^{n}\,(-1)^{n+k+p-1}\,\mu(x_1,\ldots, \widehat{x_k},\ldots,x_n)\mu(x_{n+1}, \ldots, \widehat{x_{n+p}},\ldots, x_{2n-2},x_k,x_{n+p}) +\\
& \sum_{1\leq r < k \leq n}\sum_{p=1}^{n-2}\sum_{k=1}^{n}\,(-1)^{p+k+r-1}\,\mu(x_{n+1}, \ldots \widehat{x_{n+p}},\ldots,x_{2n-2},x_k,x_r)\mu(x_1,\ldots, \widehat{x_r},\ldots, \widehat{x_k},\ldots,x_n,x_{n+p}) + \\
& \sum_{1\leq r < k \leq n}\sum_{k=1}^{n}\,(-1)^{n+k+r}\,\mu(x_{n+1},\ldots,x_{2n-2},x_r)\mu(x_1,\ldots, \widehat{x_r},\ldots, \widehat{x_k},\ldots,x_n,x_k) + \\
& \sum_{1\leq r < k \leq n}\sum_{k=1}^{n}\,(-1)^{n+k+r-1}\,\mu(x_{n+1},\ldots,x_{2n-2},x_k)\mu(x_1,\ldots, \widehat{x_r},\ldots, \widehat{x_k},\ldots,x_n,x_r) + \\
& \sum_{1\leq k < r\leq n}\sum_{p=1}^{n-2}\sum_{k=1}^{n}\,(-1)^{k+r+p}\,\mu(x_{n+1},\ldots, \widehat{x_{n+p}},\ldots, x_{2n-2},x_k,x_r)\mu(x_1,\ldots, \widehat{x_k},\ldots, \widehat{x_r},\ldots,x_n,x_{n+p}) + \\
& \sum_{1\leq k < r\leq n}\sum_{k=1}^{n}\,(-1)^{n+k+r-1}\,\mu(x_{n+1},\ldots, x_{2n-2},x_r)\mu(x_1,\ldots, \widehat{x_k},\ldots, \widehat{x_r},\ldots,x_n,x_k) + \\
& \sum_{1\leq k < r\leq n}\sum_{k=1}^{n}\,(-1)^{n+k+r}\,\mu(x_{n+1},\ldots, x_{2n-2},x_k)\mu(x_1,\ldots, \widehat{x_k},\ldots, \widehat{x_r},\ldots,x_n,x_r)=0,
\end{align*}
which, calling the left hand side of \eqref{property-rep-n-Lie} as $F(x_1,\ldots, x_{2n-2})$, may be arranged into
\begin{align*}
& (2n-2)\,F(x_1,\ldots, x_{2n-2}) + \\
& \sum_{1\leq r < k \leq n}\sum_{p=1}^{n-2}\sum_{k=1}^{n}\,(-1)^{r+k+p-1}\,\mu(x_1,\ldots, \widehat{x_r},\ldots, \widehat{x_k},\ldots,x_n, x_{n+p})\mu(x_{n+1}, \ldots, \widehat{x_{n+p}},\ldots, x_{2n-2},x_k,x_r) + \\
& \sum_{1\leq k < r \leq n}\sum_{p=1}^{n-2}\sum_{k=1}^{n}\,(-1)^{r+k+p}\,\mu(x_1,\ldots, \widehat{x_k},\ldots, \widehat{x_r},\ldots,x_n, x_{n+p})\mu(x_{n+1}, \ldots, \widehat{x_{n+p}},\ldots, x_{2n-2},x_k,x_r) + \\
& \sum_{1\leq r < k \leq n}\sum_{p=1}^{n-2}\sum_{k=1}^{n}\,(-1)^{p+k+r-1}\,\mu(x_{n+1}, \ldots \widehat{x_{n+p}},\ldots,x_{2n-2},x_k,x_r)\mu(x_1,\ldots, \widehat{x_r},\ldots, \widehat{x_k},\ldots,x_n,x_{n+p}) + \\
& \sum_{1\leq k < r\leq n}\sum_{p=1}^{n-2}\sum_{k=1}^{n}\,(-1)^{k+r+p}\,\mu(x_{n+1},\ldots, \widehat{x_{n+p}},\ldots, x_{2n-2},x_k,x_r)(x_1,\ldots, \widehat{x_k},\ldots, \widehat{x_r},\ldots,x_n,x_{n+p})=0.
\end{align*}
Next, setting for any $a\in\{1,\ldots,n\}$ and $b\in\{1,\ldots,n-2\}$
\[
F_{a,b}:=F(x_1,\ldots,x_{a-1},x_{n+b},x_{a+1},\ldots, x_n,x_{n+1},\ldots, x_{n+b-1},x_a,x_{n+b+1},\ldots,x_{2n-2}),
\]
the above expression may be presented as
\begin{equation}\label{counting-formula}
\sum_{k=1}^n\sum_{p=1}^{n-2}\,F_{k,p} = (n^2-2)F(x_1,\ldots, x_{2n-2}).
\end{equation}
The claim, now, follows from the iteration of \eqref{counting-formula}, or equivalently the successive applications of \eqref{rep-n-Lie-I} and \eqref{rep-n-Lie-II}.
\end{proof}

\begin{remark}
Let us note that in the case of $n=3$, \eqref{property-rep-n-Lie} is precisely \cite[Prop. 2.5(b)]{bai2016bialgebras} for 3-Lie algebras.
\end{remark}

\begin{example}\label{rep-Hom(V,W)}
Let $\C{L}$ be a $n$-Lie algebra, together with a representation $(V,\eta)$. Then, the space $\Hom(V,W)$ of linear maps from $V$ to a vector space $W$ is also a representation of $\C{L}$ via
\begin{equation}\label{ex-adj-rep-linear-map-n-Lie}
\mu: \C{L}_{n-1} \to g\ell(\Hom(V,W)), \qquad \Big(\mu(x_1,\ldots,x_{n-1})(T)\Big)(v) := -T(\eta(x_1,\ldots,x_{n-1})(v)).
\end{equation}
Indeed, we observe at once that
\begin{align}\label{Lie-derivative}
\begin{split}
& \Big(\mu(x_1,\ldots,x_{n-1})\mu(x_n,\ldots,x_{2n-2})(T) - \mu(x_n,\ldots,x_{2n-2})\mu(x_1,\ldots,x_{n-1})(T)\Big)(v) = \\
& T(\eta(x_n,\ldots,x_{2n-2})\eta(x_1,\ldots,x_{n-1})(v) - \eta(x_1,\ldots,x_{n-1})\eta(x_n,\ldots,x_{2n-2})(v)) = \\
& -\sum_{k=0}^{n-2}\,T(\eta(x_n,\ldots , [x_1,\ldots,x_{n-1},x_{n+k}], \ldots, x_{2n-2})(v)) = \\
& \sum_{k=0}^{n-2}\,\Big(\mu(x_n,\ldots , [x_1,\ldots,x_{n-1},x_{n+k}], \ldots, x_{2n-2})(T)\Big)(v),
\end{split}
\end{align}
where we employed \eqref{rep-n-Lie-I} on the second equality. 

On the other hand,
\begin{align*}
& \Big(\mu([x_1,\ldots,x_n],x_{n+1},\ldots,x_{2n-2})(T)\Big)(v) =  - T(\eta([x_1,\ldots,x_n],x_{n+1},\ldots,x_{2n-2})(v)) = \\
& -\sum_{k=1}^n\,(-1)^k\,T(\eta(x_1,\ldots, \hat{x}_k,\ldots,x_n)\eta(x_{n+1},\ldots,x_{2n-2},x_k)(v)) = \\
& \sum_{k=1}^n\,(-1)^k\,T(\eta(x_{n+1},\ldots,x_{2n-2},x_k)\eta(x_1,\ldots, \hat{x}_k,\ldots,x_n)(v)) = \\
& \sum_{k=1}^n\,(-1)^k\,\Big(\mu(x_1,\ldots, \hat{x}_k,\ldots,x_n)\mu(x_{n+1},\ldots,x_{2n-2},x_k)(T)\Big)(v),
\end{align*}
where we applied \eqref{property-rep-n-Lie} on the third equality. 
\end{example}

For a direct proof, one that does not appeal to an analogue of \cite[Prop. 2.5(b)]{bai2016bialgebras}, we refer the reader to \cite[Thm. 3.9]{SongTang19}.

\subsubsection{Generalized derivations}~

In accordance with the 3-Lie algebras, given a $n$-Lie algebra $\C{L}$, a linear operator $D:\C{L}\to \C{L}$ is called a \emph{derivation} if 
\[
D([x_1,\ldots,x_n]) = \sum_{k=1}^n\,[x_1,\ldots, D(x_k),\ldots, x_n]
\] 
for any $x_1,\ldots, x_n \in \C{L}$. Accordingly, it follows at once from the fundamental identity \eqref{fund-ID-n-Lie} that 
\[
\ad(x_1,\ldots,x_{n-1}):\C{L}\to \C{L}
\]
given by \eqref{ad-n-Lie} is a derivation for any $x_1,\ldots, x_{n-1} \in \C{L}$. Such derivations are called \emph{inner derivations}.

A \emph{generalized derivation} of a $n$-Lie algebra $\C{L}$, on the other hand, is defined to be a linear map $D:\C{L}_{n-1}\to \C{L}$ satisfying
\begin{align}
& D(x_1,\ldots, x_{n-2}, [x_{n-1},\ldots,x_{2n-2}]) = \sum_{k=0}^{n-1}\,[x_{n-1},\ldots,D(x_1,\ldots, x_{n-2},x_{n-1+k}), \ldots, x_{2n-2}] , \label{gen-der-n-Lie-I}\\
& [x_1,\ldots, x_{n-1},D(x_n,\ldots,x_{2n-2})] + (-1)^n [D(x_1,\ldots,x_{n-1}), x_n,\ldots,x_{2n-2}] =  \label{gen-der-n-Lie-II}\\
& \hspace{2cm}\sum_{k=0}^{n-2}\, D(x_n,\ldots,[x_1,\ldots,x_{n-1},x_{n+k}], \ldots, x_{2n-2}), \notag\\
& D(x_1,\ldots,x_{n-2},D(x_{n-1},\ldots,x_{2n-3})) = \sum_{k=0}^{n-2}\,D(x_{n-1},\ldots, D(x_1,\ldots,x_{n-2},x_{n-1+k}),\ldots, x_{2n-2}), \label{gen-der-n-Lie-III}
\end{align}
for any $x_1,\ldots,x_{n-1}\in \C{L}$, see \cite[Def. 5.1]{SongTang19}.

It follows at once, as is noted in \cite[Lemma 5.3]{SongTang19}, that for any $y\in \C{L}$ $\ad_y:\C{L}_{n-1}\to\C{L}$, $\ad_y(z_1,\ldots, z_{n-1}):= [y,z_1,\ldots, z_{n-1}]$, is a generalized derivation. Such a generalized derivation is called an \emph{inner generalized derivation}.

Let us note also that $D:\C{L}_{n-1}\to \C{L}$ being a generalized derivation, $\C{L}\oplus_D k:= \C{L}\oplus k$ is a $n$-Lie algebra through
\begin{equation}
[x_1+\a_1, \ldots, x_n+\a_n] :=  [x_1,\ldots,x_n] + \sum_{k=1}^n\,(-1)^{k+1}\a_kD(x_1,\ldots,\widehat{x_k},\ldots,x_n).
\end{equation}

Remarks on the conditions \eqref{gen-der-n-Lie-I} and \eqref{gen-der-n-Lie-III} are in order now.

\begin{remark}
Setting 
\begin{equation}\label{D-sharp-derivation}
D^\sharp:\C{L}_{n-2} \to g\ell(\C{L}), \qquad D^\sharp(x_1,\ldots,x_{n-2})(y) := D(x_1,\ldots,x_{n-2},y),
\end{equation}
we see at once that \eqref{gen-der-n-Lie-I} is equivalent to $D^\sharp(x_1,\ldots,x_{n-2})\in g\ell(\C{L})$ being a derivation of the $n$-Lie algebra $\C{L}$, for any $x_1,\ldots,x_{n-2},y\in\C{L}$.
\end{remark}

\begin{remark}
The condition \eqref{gen-der-n-Lie-III} is equivalent for the $n$-Lie algebra $\C{L}$ to have the structure of a $(n-1)$-Lie algebra through
\[
D:\C{L}_{n-1}\to \C{L}, \qquad x_1\wdots x_{n-1}\mapsto D(x_1,\ldots, x_{n-1}).
\]
\end{remark}

As for the condition \eqref{gen-der-n-Lie-II}, we have the following analogue of \cite[Lemma 3.5]{SongJian18}. Let us, however, record first the semi-direct sum Leibniz algebra $\C{L}\rtimes \C{L}_{n-1}:= \C{L}\oplus \C{L}_{n-1}$ associated to a $n$-Lie algebra $\C{L}$.

To begin with, $\C{L}$ being an adjoint representation over itself via \eqref{ad-n-Lie}, it becomes a representation of the Leibniz algebra $\C{L}_{n-1}$ through
\[
x \rt y := [x^1,\ldots, x^{n-1},y], \qquad y \lt x := - [x^1,\ldots, x^{n-1},y],
\]
for any $y\in \C{L}$ and any $x:=x^1\wdots x^{n-1}\in \C{L}_{n-1}$. Accordingly, $\C{L}\oplus \C{L}_{n-1}$ is a Leibniz algebra, the bracket on which being
\[
[y_1+x_1, y_2 + x_2] := [x_1^1,\ldots,x_1^{n-1},y_2] - [x_2^1,\ldots,x_2^{n-1},y_1] + [x_1,x_2],
\]
for any $y_1, y_2 \in \C{L}$, and any $x_1:= x_1^1\wdots x_1^{n-1}, x_2:= x_2^1\wdots x_2^{n-1} \in \C{L}_{n-1}$. Let us note that
\[
[x_1,x_2] := \sum_{k=1}^{n-1}\,x_2^1\wdots [x_1^1,\ldots,x_1^{n-1},x_2^k]\wdots x_2^{n-1} \in \C{L}_{n-1}.
\]

\begin{proposition}
Given a $n$-Lie algebra $\C{L}$, let $D:\C{L}_{n-1}\to \C{L}$ be a generalized derivation. Then,
\[
D:\C{L}\rtimes \C{L}_{n-1} \to \C{L}\rtimes \C{L}_{n-1}, \qquad D(y + x) := D(x),
\]
for any $y\in \C{L}$ and any $x:=x^1\wdots x^{n-1}\in \C{L}_{n-1}$, is a derivation of Leibniz algebras.
\end{proposition}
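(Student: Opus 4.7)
The plan is to verify the Leibniz derivation identity $D([\alpha_1,\alpha_2]) = [D(\alpha_1),\alpha_2] + [\alpha_1, D(\alpha_2)]$ for arbitrary $\alpha_i = y_i + x_i \in \C{L}\rtimes\C{L}_{n-1}$, by reducing both sides to expressions living purely in the $\C{L}$-summand, where the identity will become the generalized-derivation axiom \eqref{gen-der-n-Lie-II} up to a sign coming from antisymmetry of the $n$-bracket.

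First I would observe that $D$ annihilates the $\C{L}$-summand and takes values entirely in $\C{L}$. Expanding the left-hand side using the explicit bracket on $\C{L}\rtimes\C{L}_{n-1}$, the two $\C{L}$-valued summands $[x_1^1,\ldots,x_1^{n-1},y_2]$ and $-[x_2^1,\ldots,x_2^{n-1},y_1]$ are killed by $D$, leaving only $D([x_1,x_2])$, which unfolds as $\sum_{\ell=1}^{n-1} D(x_2^1,\ldots,[x_1^1,\ldots,x_1^{n-1},x_2^\ell],\ldots,x_2^{n-1})$. On the right-hand side, since $D(y_i+x_i) = D(x_i) \in \C{L}$ has zero $\C{L}_{n-1}$-component, every term in the bracket formula involving a $\C{L}_{n-1}$-component of $D(x_i)$ vanishes, and what remains is exactly $[x_1^1,\ldots,x_1^{n-1}, D(x_2)] - [x_2^1,\ldots,x_2^{n-1}, D(x_1)]$.

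The key step is then to invoke \eqref{gen-der-n-Lie-II} with the two packages of arguments $(x_1^1,\ldots,x_1^{n-1})$ and $(x_2^1,\ldots,x_2^{n-1})$. This rewrites $D([x_1,x_2])$ as $[x_1^1,\ldots,x_1^{n-1}, D(x_2)] + (-1)^n [D(x_1), x_2^1,\ldots,x_2^{n-1}]$. The total antisymmetry of the $n$-Lie bracket then lets me cycle $D(x_1)$ past the remaining $n-1$ slots, producing the sign $(-1)^{n-1}$; combined with the $(-1)^n$, this gives precisely $-[x_2^1,\ldots,x_2^{n-1},D(x_1)]$, and both sides coincide.

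The main obstacle is just sign-bookkeeping, namely the interplay between the $(-1)^n$ in \eqref{gen-der-n-Lie-II} and the $(-1)^{n-1}$ incurred by cycling $D(x_1)$ into the last slot of the $n$-bracket; beyond this, one should also check that the argument is insensitive to the choice of wedge representatives of $x_i \in \C{L}_{n-1}$, which follows because every step is multilinear and alternating in the entries $x_i^j$. It is worth noting that conditions \eqref{gen-der-n-Lie-I} and \eqref{gen-der-n-Lie-III} are not needed in this proof: the Leibniz derivation property on the semi-direct sum is exactly encoded by \eqref{gen-der-n-Lie-II}.
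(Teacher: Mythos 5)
Your proposal is correct and follows essentially the same route as the paper: expand both sides of the Leibniz derivation identity using the explicit semi-direct sum bracket, observe that $D$ kills the $\C{L}$-summand so that only $D([x_1,x_2])$ and $[x_1^1,\ldots,x_1^{n-1},D(x_2)]-[x_2^1,\ldots,x_2^{n-1},D(x_1)]$ survive, and conclude via \eqref{gen-der-n-Lie-II} after the $(-1)^{n-1}$ sign from cycling $D(x_1)$ to the first slot. Your observation that only \eqref{gen-der-n-Lie-II} (and not \eqref{gen-der-n-Lie-I} or \eqref{gen-der-n-Lie-III}) is used matches the paper's proof as well.
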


\begin{proof}
On the one hand we have
\begin{align*}
& D([y_1+x_1,y_2+x_2]) = D([x_1^1,\ldots,x_1^{n-1},y_2] - [x_2^1,\ldots,x_2^{n-1},y_1] + [x_1,x_2]) = \\
& \sum_{k=1}^{n-1}\,D(x_2^1,\ldots, [x_1^1,\ldots,x_1^{n-1},x_2^k],\ldots, x_2^{n-1}), 
\end{align*}
while on the other hand,
\begin{align*}
& [D(y_1+x_1),y_2+x_2] + [y_1+x_1,D(y_2+x_2)] = \\
& [D(x_1),y_2+x_2] + [y_1+x_1,D(x_2)] = \\
& [x_1^1,\ldots,x_1^{n-1}, D(x_2^1,\ldots,x_2^{n-1})] - [x_2^1,\ldots,x_2^{n-1}, D(x_1^1,\ldots,x_1^{n-1})] = \\
& [x_1^1,\ldots,x_1^{n-1}, D(x_2^1,\ldots,x_2^{n-1})] + (-1)^n [D(x_1^1,\ldots,x_1^{n-1}),x_2^1,\ldots,x_2^{n-1}].
\end{align*}
The equality, and hence the claim, now follows from \eqref{gen-der-n-Lie-II}.
\end{proof}

\subsection{Cohomology of $n$-Lie algebras}\label{subsect-cohom-n-Lie}~

In order to classify the derivation extensions of $n$-Lie algebras, via cohomology, we shall now recall the cohomology theory for $n$-Lie algebras, developed in \cite{daletskii1997leibniz}, to a cohomology theory with arbitrary coefficients, see for instance \cite[Sect. 3]{SongTang19}. 

Let $\C{L}$ be a $n$-Lie algebra, and let $(V,\mu)$ be a representation of $\C{L}$. Setting $x_i := x_i^1\wdots x_i^{n-1} \in \C{L}_{n-1}$,  
\begin{equation}\label{n-Lie-diff-comp}
C(\C{L},V) := \bigoplus_{m\geq 0} \,C^m(\C{L},V), \qquad C^m(\C{L},V):= \Hom\Big(\C{L}_{n-1}^{\wedge\,(m-1)}\wedge \C{L}, V\Big), \quad m\geq 1,
\end{equation}
where $C^0(\C{L},V):= \C{L}_{n-2}\ot V$, is a differential complex via
\begin{align}\label{n-Lie-diff}
\begin{split}
& \delta:C^0(\C{L},V) \to C^1(\C{L},V), \qquad \d(z_1\wdots z_{n-2}\ot v) (y) := \mu(z_1,\ldots, z_{n-2},y)(v), \\
& \delta:C^m(\C{L},V) \to C^{m+1}(\C{L},V), \qquad m\geq 1\\
& \delta f(x_1,\ldots, x_m,y)  := \\
& \sum_{1\leq i < j \leq m}\, (-1)^i\, f(x_1,\ldots,\widehat{x_i},\ldots,  [x_i,x_j], \ldots, x_m,y) +\\
 & \hspace{3cm}  \sum_{i=1}^{m}\, (-1)^i\, f(x_1,\ldots,\widehat{x_i},\ldots,x_m,[x_i^1,\ldots,x_i^{n-1},y]) +\\
 &\hspace{3cm} \sum_{i=1}^{m}\, (-1)^{i+1} \,\mu(x_i^1,\ldots,x_i^{n-1})\Big( f(x_1,\ldots,\widehat{x_i},\ldots,x_m,y)\Big) +\\
 & \hspace{3cm}  \sum_{i=1}^{n-1}\,(-1)^{n-1+m+i}\, \mu(x_m^1,\ldots,\widehat{x_m^i},\ldots,x_m^{n-1},y) \Big(f(x_1,\ldots,x_{m-1}, x_m^i)\Big),
 \end{split}
\end{align}
for any $f\in C^m(\C{L},V)$.

We shall now present, following \cite{daletskii1997leibniz}, the relation between the cohomology of a $n$-Lie algebra $\C{L}$, and the cohomology of the associated Leibniz algebra $\C{L}_{n-1}$. 

To this end, we shall need the following auxiliary result on the representation of the Leibniz algebra associated to the given $n$-Lie algebra.

\begin{proposition}\label{prop-L-n-2-ot-V-rep-L-n-1}
Given a $n$-Lie algebra $\C{L}$, together with a representation $(V,\mu)$, the linear space $\C{L}_{n-2}\ot V$ is a representation of the Leibniz algebra $\C{L}_{n-1}$ through
\begin{align}
& x\rt (z_1\wdots z_{n-2}\ot v) := \label{rep-Leibniz-n-Lie-I}\\
& \hspace{1cm} \sum_{k=1}^{n-2}\,z_1\wdots [x^1,\ldots, x^{n-1}, z_k]\wdots z_{n-2}\ot v + z_1\wdots z_{n-2}\ot \mu(x^1,\ldots, x^{n-1})(v), \notag\\
& (z_1\wdots z_{n-2}\ot v) \lt x := \sum_{k=1}^{n-1}\,(-1)^{n+k}\, x^1\wdots \widehat{x^k}\wdots x^{n-1}\ot \mu(z_1,\ldots, z_{n-2}, x^k)(v). \label{rep-Leibniz-n-Lie-II}
\end{align}
\end{proposition}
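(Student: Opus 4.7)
The plan is to verify, directly from the definitions \eqref{rep-Leibniz-n-Lie-I} and \eqref{rep-Leibniz-n-Lie-II}, the three defining conditions \eqref{rep-1}, \eqref{rep-2}, \eqref{rep-3} of a Leibniz representation. Throughout, for $x = x^1\wdots x^{n-1}$ and $y = y^1\wdots y^{n-1}$ in $\C{L}_{n-1}$, we shall write $[x,y] = \sum_{k=1}^{n-1}\, y^1\wdots [x^1,\ldots, x^{n-1},y^k]\wdots y^{n-1}$, and test each identity on a generator $w := z_1\wdots z_{n-2}\ot v$ of $\C{L}_{n-2}\ot V$.

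For \eqref{rep-1}, the expansion of $x\rt(y\rt w)$ via \eqref{rep-Leibniz-n-Lie-I} produces four kinds of terms: a ``pure $\C{L}_{n-2}$-piece'' in which the adjoints of $x$ and $y$ act on two of the $z_k$'s (or twice on the same $z_k$); two ``mixed pieces'' in which one bracket acts on some $z_k$ while $\mu$ acts on $v$; and a ``pure $V$-piece'' in which $\mu(x^1,\ldots,x^{n-1})\mu(y^1,\ldots,y^{n-1})$ acts on $v$. Antisymmetrizing in $x,y$, the pure $\C{L}_{n-2}$-piece collapses, via the fundamental identity \eqref{fund-ID-n-Lie} applied coordinatewise, to the piece of $[x,y]\rt w$ that touches a single $z_k$, while the mixed pieces cancel against each other term by term. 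Finally, the pure $V$-piece collapses, via \eqref{rep-n-Lie-I}, to the piece of $[x,y]\rt w$ that touches $v$ through $\mu$.

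For \eqref{rep-2}, one expands $w\lt [x,y]$ using \eqref{rep-Leibniz-n-Lie-II} and the above formula for $[x,y]$. Comparing with $x\rt(w\lt y) - (x\rt w)\lt y$, the terms in which $x$ acts on a $z_k$ within $x\rt (w\lt y)$ cancel precisely against the analogous terms in $(x\rt w)\lt y$, leaving only contributions in which $\mu$ appears; the resulting identity is exactly the representation axiom \eqref{rep-n-Lie-II} applied with the variables grouped as $[x^1,\ldots,x^{n-1},y^k]$, together with a relabeling bookkeeping governed by the sign $(-1)^{n+k}$.

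For \eqref{rep-3}, one expands both $(w\lt x)\lt y$ and $(x\rt w)\lt y$ using \eqref{rep-Leibniz-n-Lie-I}-\eqref{rep-Leibniz-n-Lie-II}; the outcome is a double sum of tensors $y^1\wdots \widehat{y^\ell}\wdots y^{n-1}\ot \mu(\cdots)\mu(\cdots)(v)$ in which the two $\mu$ factors involve the arguments $(z_1,\ldots,z_{n-2},x^k)$ and $(z_1,\ldots,z_{n-2},y^\ell)$ in the two possible orders. The required vanishing is then precisely the identity \eqref{property-rep-n-Lie} established in the preceding proposition, applied with the arguments $(z_1,\ldots,z_{n-2},x^1,\ldots,x^{n-1})$ and $(z_1,\ldots,z_{n-2},y^1,\ldots,y^{n-1})$ in the role of the $2n-2$ entries.

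The main obstacle is purely combinatorial: keeping track of signs and hats in the three expansions, especially in \eqref{rep-3} where one must recognize that the sum produced by the definitions is exactly the one that \eqref{property-rep-n-Lie} kills. Conceptually, the proof amounts to observing that $\rt$ acts by a ``Leibniz-type'' derivation on the tensor factors of $\C{L}_{n-2}\ot V$ (so that the fundamental identity and \eqref{rep-n-Lie-I} handle \eqref{rep-1}), that $\lt$ is the dual $V$-valued pairing (so that \eqref{rep-n-Lie-II} handles \eqref{rep-2}), and that the graded antisymmetry of $\mu$ in its $(n-1)$ arguments encoded in \eqref{property-rep-n-Lie} handles \eqref{rep-3}.
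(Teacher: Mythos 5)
Your overall strategy --- verifying \eqref{rep-1}, \eqref{rep-2}, \eqref{rep-3} directly on generators $z_1\wdots z_{n-2}\ot v$ --- is the same as the paper's, and your account of \eqref{rep-1} is essentially correct: the symmetric pieces cancel, the nested brackets collapse via \eqref{fund-ID-n-Lie}, and the $\mu\mu$-commutator collapses via \eqref{rep-n-Lie-I}. However, the two remaining verifications are attributed to the wrong identities, and as described they would not go through.

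For \eqref{rep-2}: the identity you need is \eqref{rep-n-Lie-I}, not \eqref{rep-n-Lie-II}. Note that in $w\lt y$ the elements $z_1,\ldots,z_{n-2}$ are already absorbed into $\mu(z_1,\ldots,z_{n-2},y^p)$, so there are no ``terms in which $x$ acts on a $z_k$ within $x\rt(w\lt y)$''; such terms occur only in $(x\rt w)\lt y$, and they are accounted for by the commutator relation $\mu(x^1,\ldots,x^{n-1})\mu(z_1,\ldots,z_{n-2},y^p)-\mu(z_1,\ldots,z_{n-2},y^p)\mu(x^1,\ldots,x^{n-1})=\sum_k\mu(z_1,\ldots,[x^1,\ldots,x^{n-1},z_k],\ldots,y^p)+\mu(z_1,\ldots,z_{n-2},[x^1,\ldots,x^{n-1},y^p])$, which is \eqref{rep-n-Lie-I}; the last summand produces exactly the $p$-th term of $w\lt[x,y]$. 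No instance of \eqref{rep-n-Lie-II} (a bracket in the \emph{first} slot of $\mu$) arises here. For \eqref{rep-3}: your instantiation of \eqref{property-rep-n-Lie} is dimensionally incoherent --- that identity takes $2n-2$ arguments split into a block of $n$ and a block of $n-2$, whereas you propose two lists of $2n-3$ elements each. The workable instantiation is $(x^1,\ldots,x^{n-1},y^p)$ as the block of $n$ and $(z_1,\ldots,z_{n-2})$ as the block of $n-2$; but even then \eqref{property-rep-n-Lie} alone does not kill the terms $\mu(z_1,\ldots,[x^1,\ldots,x^{n-1},z_k],\ldots,z_{n-2},y^p)$ coming from the first summand of \eqref{rep-Leibniz-n-Lie-I} inside $(x\rt w)\lt y$, since \eqref{property-rep-n-Lie} involves only products of $\mu$'s and no brackets. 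The paper first uses \eqref{rep-n-Lie-I} to trade those bracket terms for $\mu(x^1,\ldots,x^{n-1})\mu(z_1,\ldots,z_{n-2},y^p)-\mu(z_1,\ldots,z_{n-2},[x^1,\ldots,x^{n-1},y^p])$, and then \eqref{rep-n-Lie-II} to expand $\mu([x^1,\ldots,x^{n-1},y^p],z_1,\ldots,z_{n-2})$ against $(w\lt x)\lt y$. You should redo \eqref{rep-2} and \eqref{rep-3} along these lines; as written, the cancellations you invoke are not the ones that actually occur.
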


\begin{proof}
Let us begin with the verification of \eqref{rep-1}. To this end, we note from \eqref{rep-Leibniz-n-Lie-I} that
\begin{align*}
&y\rt (x\rt (z_1\wdots z_{n-2}\ot v) )= \\
& \sum_{k=1}^{n-2}\,y\rt \Big(z_1\wdots [x^1,\ldots, x^{n-1}, z_k]\wdots z_{n-2}\ot v\Big) +\\
&\hspace{3cm}  y\rt \Big(z_1\wdots z_{n-2}\ot \mu(x^1,\ldots, x^{n-1})(v)\Big) = \\
& \sum_{k=1}^{n-2}\underset{p\neq k}{\sum_{p=1}^{n-2}}\, z_1\wdots [y^1,\ldots, y^{n-1}, z_p] \wdots [x^1,\ldots, x^{n-1}, z_k]\wdots z_{n-2}\ot v + \\
& \sum_{k=1}^{n-2}\, z_1\wdots [y^1,\ldots, y^{n-1}, [x^1,\ldots, x^{n-1}, z_k]]\wdots z_{n-2}\ot v + \\
& \hspace{1cm} \sum_{k=1}^{n-2}\, z_1\wdots [x^1,\ldots, x^{n-1}, z_k]\wdots z_{n-2}\ot \mu(y^1,\ldots,y^{n-1})(v) + \\
& \hspace{2cm} \sum_{p=1}^{n-2}\,z_1\wdots [y^1,\ldots, y^{n-1}, z_p] \wdots z_{n-2}\ot \mu(x^1,\ldots,x^{n-1})(v) + \\
&\hspace{3cm}   z_1\wdots z_{n-2}\ot \mu(y^1,\ldots,y^{n-1})\mu(x^1,\ldots,x^{n-1})(v).
\end{align*} 
Accordingly, it follows at once that
\begin{align*}
& x\rt (y\rt (z_1\wdots z_{n-2}\ot v) ) - y\rt (x\rt (z_1\wdots z_{n-2}\ot v) ) = \\
& \sum_{k=1}^{n-2}\, z_1\wdots [x^1,\ldots, x^{n-1}, [y^1,\ldots, y^{n-1}, z_k]]\wdots z_{n-2}\ot v - \\
& \hspace{1cm}\sum_{k=1}^{n-2}\, z_1\wdots [y^1,\ldots, y^{n-1}, [x^1,\ldots, x^{n-1}, z_k]]\wdots z_{n-2}\ot v + \\
& z_1\wdots z_{n-2}\ot \mu(x^1,\ldots,x^{n-1})\mu(y^1,\ldots,y^{n-1})(v) - \\
& \hspace{2cm} z_1\wdots z_{n-2}\ot \mu(y^1,\ldots,y^{n-1})\mu(x^1,\ldots,x^{n-1})(v),
\end{align*}
from which, and \eqref{rep-n-Lie-I}, we deduce \eqref{rep-1}. 

As for \eqref{rep-2} we compute, on one hand,
\begin{align*}
& (x\rt (z_1\wdots z_{n-2}\ot v) )\lt y = \\
& \sum_{k=1}^{n-2}\sum_{p=1}^{n-1}\,(-1)^{n+p}\,y^1\wdots \widehat{y^p}\wdots y^{n-1}\ot\mu(z_1,\ldots, [x^1,\ldots, x^{n-1}, z_k],\ldots, z_{n-2},y^p)(v) +\\
& \sum_{p=1}^{n-1}\,(-1)^{n+p}\,y^1\wdots \widehat{y^p}\wdots y^{n-1}\ot\mu(z_1,\ldots, z_{n-2},y^p)\mu(x^1,\ldots, x^{n-1})(v), 
\end{align*}
and on the other hand
\begin{align*}
& x\rt ((z_1\wdots z_{n-2}\ot v) \lt y) = \\
& \sum_{p=1}^{n-1}\,(-1)^{n+p}\, x\rt \Big(y^1\wdots \widehat{y^p}\wdots y^{n-1}\ot \mu(z_1,\ldots, z_{n-2}, y^p)(v)\Big) = \\
& \sum_{p=1}^{n-1}\underset{k\neq p}{\sum_{k=1}^{n-1}}\,(-1)^{n+p}\, y^1\wdots [x^1,\ldots,x^{n-1},y^k] \wdots \widehat{y^p}\wdots y^{n-1}\ot \mu(z_1,\ldots, z_{n-2}, y^p)(v) + \\
& \sum_{p=1}^{n-1}\,(-1)^{n+p}\,y^1\wdots \widehat{y^p}\wdots y^{n-1}\ot \mu(x^1,\ldots, x^{n-1})\mu(z_1,\ldots, z_{n-2}, y^p)(v).
\end{align*}
We thus conclude \eqref{rep-2}, in view of \eqref{rep-n-Lie-I}. 

Finally, we proceed onto \eqref{rep-3}. To this end, it suffices to compute
\begin{align*}
& ((z_1\wdots z_{n-2}\ot v) \lt x) \lt y = \\
& \sum_{k=1}^{n-1}\,(-1)^{n+k}\, \Big(x^1\wdots \widehat{x^k}\wdots x^{n-1}\ot \mu(z_1,\ldots, z_{n-2}, x^k)(v)\Big) \lt y = \\
& \sum_{k=1}^{n-1}\sum_{p=1}^{n-1}\,(-1)^{k+p}\, y^1\wdots \widehat{y^p}\wdots y^{n-1}\ot\mu(x^1,\ldots, \widehat{x^k},\ldots, x^{n-1},y^p) \mu(z_1,\ldots, z_{n-2}, x^k)(v),
\end{align*}
since, in view of \eqref{rep-n-Lie-I}, 
\begin{align*}
& (x\rt (z_1\wdots z_{n-2}\ot v) )\lt y = \\
& \sum_{p=1}^{n-1}\,(-1)^{n+p}\,y^1\wdots \widehat{y^p}\wdots y^{n-1}\ot \mu(x^1,\ldots, x^{n-1})\mu(z_1,\ldots, z_{n-2},y^p)(v) -\\
& \sum_{p=1}^{n-1}\,(-1)^{n+p}\,y^1\wdots \widehat{y^p}\wdots y^{n-1}\ot\mu(z_1,\ldots, z_{n-2},[x^1,\ldots, x^{n-1},y^p])(v) = \\
& \sum_{p=1}^{n-1}\,(-1)^{n+p}\,y^1\wdots \widehat{y^p}\wdots y^{n-1}\ot \mu(x^1,\ldots, x^{n-1})\mu(z_1,\ldots, z_{n-2},y^p)(v) + \\
& \sum_{p=1}^{n-1}\,(-1)^{p+1}\,y^1\wdots \widehat{y^p}\wdots y^{n-1}\ot\mu([x^1,\ldots, x^{n-1},y^p],z_1,\ldots, z_{n-2})(v).
\end{align*}
The condition \eqref{rep-3}, then, follows from \eqref{rep-n-Lie-II}.
\end{proof}

Now, the relation between the cohomology of a $n$-Lie algebra, and its associated Leibniz algebra of fundamental objects is given below.

\begin{proposition}\label{prop-D-n-Lie-map-of-complexes}
Given a $n$-Lie algebra $\C{L}$, together with a representation $(V,\mu)$, the diagram 
\[
\xymatrix{
C^m(\C{L}, V) \ar[d]_\d \ar[r]^{\D^m\,\,\,\,\,\,\,\,\,\,\,\,\,\,\,}  & CL^m(\C{L}_{n-1}, \C{L}_{n-2}\ot V) \ar[d]^d \\
C^{m+1}(\C{L}, V) \ar[r]^{\D^{m+1}\,\,\,\,\,\,\,\,\,\,\,\,\,\,\,} & CL^{m+1}(\C{L}_{n-1}, \C{L}_{n-2}\ot V)
}
\]
is commutative for any $m\geq 0$, where
\begin{align*}
& \D^m:C^m(\C{L},V) \to CL^m(\C{L}_{n-1},\C{L}_{n-2}\ot V), \qquad f \mapsto \D^m(f),\\
& \D^m(f) (x_1,\ldots,x_m):= \sum_{k=1}^{n-1}\,(-1)^{k}\,x_m^1\wdots \widehat{x_m^k}\wdots x_m^{n-1}\ot f(x_1,\ldots, x_{m-1},x_m^k), \qquad m\geq 1,\\
& \D^0:=-\Id:\C{L}_{n-2}\ot V \to \C{L}_{n-2}\ot V.
\end{align*}
\end{proposition}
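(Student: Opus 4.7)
The plan is a direct bookkeeping computation: expand both $(\Delta^{m+1}\delta f)(x_1,\ldots,x_{m+1})$ and $(d\Delta^m f)(x_1,\ldots,x_{m+1})$ using the formulas \eqref{n-Lie-diff}, \eqref{Leibniz-diff}, the definition of $\Delta^m$, and the actions \eqref{rep-Leibniz-n-Lie-I}--\eqref{rep-Leibniz-n-Lie-II} of $\C{L}_{n-1}$ on $\C{L}_{n-2}\ot V$, and then match the resulting terms one family at a time. The base case $m=0$ is handled separately by substituting $\delta(z_1\wdots z_{n-2}\ot v)(y) = \mu(z_1,\ldots,z_{n-2},y)(v)$ into the formula for $\Delta^1$ and comparing the outcome with $d(-\xi)$ computed via \eqref{Leibniz-diff} at a $0$-cochain; the identity then collapses to the explicit form of the right action in \eqref{rep-Leibniz-n-Lie-II}.

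For $m\geq 1$, I will first unfold $(\Delta^{m+1}\delta f)(x_1,\ldots,x_{m+1})$ into four families, indexed by the four sub-sums in \eqref{n-Lie-diff} with $y$ replaced by $x_{m+1}^k$ and premultiplied by the wedge $(-1)^k\, x_{m+1}^1\wdots \widehat{x_{m+1}^k}\wdots x_{m+1}^{n-1}$ coming from $\Delta^{m+1}$:
\begin{enumerate}[(i)]
\item pure bracket terms $f(\ldots,[x_i,x_j],\ldots,x_m,x_{m+1}^k)$ with $1\le i<j\le m$;
\item inner action terms $f(\ldots,\widehat{x_i},\ldots,x_m,[x_i^1,\ldots,x_i^{n-1},x_{m+1}^k])$;
\item left $\mu$-action terms $\mu(x_i^1,\ldots,x_i^{n-1})\bigl(f(\ldots,\widehat{x_i},\ldots,x_m,x_{m+1}^k)\bigr)$;
\item boundary terms $\mu(x_m^1,\ldots,\widehat{x_m^i},\ldots,x_m^{n-1},x_{m+1}^k)\bigl(f(x_1,\ldots,x_{m-1},x_m^i)\bigr)$.
\end{enumerate}
In parallel, $(d\Delta^m f)(x_1,\ldots,x_{m+1})$ decomposes via \eqref{Leibniz-diff} into Leibniz-bracket terms $\Delta^m f(\ldots,[x_i,x_j],\ldots)$ for $1\le i<j\le m+1$, left-action terms $x_k\rt \Delta^m f(\ldots)$ for $1\le k\le m$, and the lone right-action term $\Delta^m f(x_1,\ldots,x_m)\lt x_{m+1}$. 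Applying \eqref{rep-Leibniz-n-Lie-I} further splits each left-action term into a ``bracket-on-the-wedge-factors'' piece and a ``$\mu$-on-the-$V$-coefficient'' piece, and \eqref{rep-Leibniz-n-Lie-II} expands the right-action term into a double sum over the slots of $x_{m+1}$.

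The matching then proceeds as follows. The Leibniz-bracket terms with $j\le m$ are immediately family (i); the $\mu$-on-coefficient sub-pieces of the left action are immediately family (iii); the right-action expansion recovers family (iv) after a sign check $(-1)^{m+1+k+n+r}=(-1)^{n-1+m+k+r}$. The delicate part is the three-way interplay between (a) the Leibniz-bracket terms with $j=m+1$, where the Leibniz bracket $[x_i,x_{m+1}] = \sum_{p=1}^{n-1}x_{m+1}^1\wdots [x_i^1,\ldots,x_i^{n-1},x_{m+1}^p]\wdots x_{m+1}^{n-1}$ produces terms indexed both by the ``touched'' slot $p$ and, after $\Delta^m$ is applied, by the slot $q$ removed from the resulting wedge; (b) the bracket-on-wedge-factors sub-pieces of the left action; and (c) family (ii). The ``$p=q$'' sub-terms of (a) reproduce (ii) after relabelling, while the ``$p\neq q$'' sub-terms of (a) cancel exactly against (b).

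The main obstacle is precisely the sign bookkeeping in this three-way cancellation: the competing signs $(-1)^{i+p+q}$ must be reconciled by using the antisymmetry of the wedge product to move the slot $[x_i^1,\ldots,x_i^{n-1},x_{m+1}^p]$ into its canonical position and then matching indices with the left-action sub-piece. Once this cancellation is handled and the routine index shifts are performed, every term on both sides is accounted for and the commutativity of the square follows.
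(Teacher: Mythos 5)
Your proof follows essentially the same route as the paper's: both expand $\Delta^{m+1}(\delta f)$ into the four families (i)--(iv) and $d(\Delta^m f)$ via the Leibniz differential together with the actions \eqref{rep-Leibniz-n-Lie-I}--\eqref{rep-Leibniz-n-Lie-II}, and both hinge on exactly the same three-way matching in which the $j=m+1$ Leibniz-bracket terms split into a part reproducing family (ii) and a part cancelling the bracket-on-wedge pieces of the left action, while the right-action term yields family (iv) after the sign check you indicate. Your separate treatment of the base case $m=0$ is a small addition that the paper's proof leaves implicit; otherwise the two arguments coincide.
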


\begin{proof}
On the one hand we have
\begin{align*}
& \D^{m+1}(\d f) (x_1,\ldots,x_{m+1}) = \sum_{k=1}^{n-1}\,(-1)^{k}\,x_{m+1}^1\wdots \widehat{x_{m+1}^k}\wdots x_{m+1}^{n-1}\ot \d f(x_1,\ldots, x_m,x_{m+1}^k) = \\
& \sum_{1\leq i < j \leq m}\sum_{k=1}^{n-1}\,(-1)^{i+k}\,x_{m+1}^1\wdots \widehat{x_{m+1}^k}\wdots x_{m+1}^{n-1}\ot  f(x_1,\ldots,\widehat{x_i},\ldots,  [x_i,x_j], \ldots, x_m,x_{m+1}^k) + \\
& \sum_{i=1}^{m}\sum_{k=1}^{n-1}\,(-1)^{i+k}\,x_{m+1}^1\wdots \widehat{x_{m+1}^k}\wdots x_{m+1}^{n-1}\ot  f(x_1,\ldots,\widehat{x_i},\ldots, x_m,[x_i^1,\ldots,x_i^{n-1},x_{m+1}^k]) + \\
& \sum_{i=1}^{m}\sum_{k=1}^{n-1}\,(-1)^{i+1+k}\,x_{m+1}^1\wdots \widehat{x_{m+1}^k}\wdots x_{m+1}^{n-1}\ot \mu(x_i^1,\ldots,x_i^{n-1})\Big( f(x_1,\ldots,\widehat{x_i},\ldots, x_m,x_{m+1}^k)\Big) + \\
& \sum_{i=1}^{n-1}\sum_{k=1}^{n-1}\,(-1)^{n-1+m+i+k}\,x_{m+1}^1\wdots \widehat{x_{m+1}^k}\wedge \cdots \\
& \hspace{2cm} \cdots \wedge x_{m+1}^{n-1}\ot \mu(x_{m}^1\wdots \widehat{x_{m}^i}\wdots x_{m}^{n-1},x_{m+1}^k) \Big(f(x_1,\ldots, x_{m-1},x_{m}^i)\Big),
\end{align*}
while on the other hand
\begin{align*}
& d(\D^m f) (x_1,\ldots,x_{m+1}) = \sum_{1\leq i < j\leq m+1}\,(-1)^i\,(\D^m f)(x_1,\ldots,\widehat{x_i},\ldots,  [x_i,x_j], \ldots, x_{m+1}) +\\
& \sum_{k=1}^{m}\,(-1)^{k+1}\,x_k\rt (\D^m f)(x_1,\ldots,\widehat{x_k},\ldots,x_{m+1}) +  (-1)^{m+1}\,(\D^m f)(x_1,\ldots,x_{m}) \lt x_{m+1} = \\
& \sum_{1\leq i < j\leq m}\,(-1)^i\,(\D^m f)(x_1,\ldots,\widehat{x_i},\ldots,  [x_i,x_j], \ldots, x_{m+1}) +\\
& \sum_{1\leq i \leq m}\,(-1)^i\,(\D^m f)(x_1,\ldots,\widehat{x_i},\ldots,  x_m,[x_i,x_{m+1}]) +\\
& \sum_{k=1}^{m}\,(-1)^{k+1}\,x_k\rt (\D^m f)(x_1,\ldots,\widehat{x}_k,\ldots,x_{m+1}) +  (-1)^{m+1}\,(\D^m f)(x_1,\ldots,x_{m}) \lt x_{m+1}.
\end{align*}
Accordingly,
\begin{align*}
& d(\D^m f) (x_1,\ldots,x_{m+1}) = \\
& \sum_{1\leq i < j\leq m}\sum_{k=1}^{n-1}\,(-1)^{i+k}\,x_{m+1}^1\wdots \widehat{x_{m+1}^k} \wdots x_{m+1}^{n-1}\ot f(x_1,\ldots,\widehat{x_i},\ldots,  [x_i,x_j], \ldots, x_{m+1}^k) +\\
& \sum_{p=1}^{n-1}\underset{k\neq p}{\sum_{k=1}^{n-1}}\sum_{i=1}^m\,(-1)^{k+i}\, x_{m+1}^1\wdots \widehat{x_{m+1}^k}\wdots [x_i^1\ldots,x_i^{n-1},x_{m+1}^p]\wedge \cdots \\
& \hspace{2cm} \cdots \wedge x_{m+1}^{n-1}\ot f(x_1,\ldots,\widehat{x_i},\ldots,  x_m,x_{m+1}^k) +\\
& \sum_{p=1}^{n-1}\sum_{i=1}^m\,(-1)^{p+i}\, x_{m+1}^1\wdots \widehat{x_{m+1}^p} \wdots  x_{m+1}^{n-1}\ot f(x_1,\ldots,\widehat{x_i},\ldots,  x_m,[x_i^1\ldots,x_i^{n-1},x_{m+1}^p]) + \\
& \sum_{i=1}^{m}\,(-1)^{i+1}\,x_i\rt (\D^m f)(x_1,\ldots,\widehat{x}_i,\ldots,x_{m+1}) +  (-1)^{m+1}\,(\D^m f)(x_1,\ldots,x_{m}) \lt x_{m+1}.
\end{align*}
Next, we note that
\begin{align*}
& \sum_{i=1}^{m}\,(-1)^{i+1}\,x_i\rt (\D^m f)(x_1,\ldots,\widehat{x}_i,\ldots,x_{m+1}) = \\
& \sum_{k=1}^{n-1}\sum_{i=1}^{m}\,(-1)^{i+1+k}\,x_i\rt \Big(x_{m+1}^1 \wdots \widehat{x_{m+1}^k} \wdots x_{m+1}^{n-1}\ot f(x_1,\ldots,\widehat{x}_i,\ldots,x_{m+1}^k)\Big) = \\
& \underset{p\neq k}{\sum_{p=1}^{n-1}}\sum_{k=1}^{n-1}\sum_{i=1}^{m}\,(-1)^{i+1+k}\, x_{m+1}^1\wdots \widehat{x_{m+1}^k}\wdots [x_i^1\ldots,x_i^{n-1},x_{m+1}^p]\wedge \cdots \\
& \hspace{2cm} \cdots \wedge x_{m+1}^{n-1}\ot f(x_1,\ldots,\widehat{x_i},\ldots,  x_m,x_{m+1}^k) + \\
&  \sum_{k=1}^{n-1}\sum_{i=1}^{m}\,(-1)^{i+1+k}\, x_{m+1}^1 \wdots \widehat{x_{m+1}^k} \wdots x_{m+1}^{n-1}\ot \mu(x_i^1,\ldots,x_i^{n-1})\Big(f(x_1,\ldots,\widehat{x}_i,\ldots,x_{m+1}^k)\Big),
\end{align*}
and that
\begin{align*}
& (-1)^{m+1}\,(\D^m f)(x_1,\ldots,x_{m}) \lt x_{m+1} = \\
& \sum_{k=1}^{n-1}\,(-1)^{m+1+k}\,\Big(x_m^1\wdots \widehat{x_m^k}\wdots x_m^{n-1}\ot f(x_1,\ldots, x_{m-1},x_m^k)\Big)\lt x_{m+1} = \\
& \sum_{i=1}^{n-1}\sum_{k=1}^{n-1}\,(-1)^{n+i+m+1+k}\,x_{m+1}^1\wdots \widehat{x_{m+1}^i}\wedge \cdots \\
& \hspace{2cm} \cdots \wedge x_{m+1}^{n-1}\ot \mu(x_{m}^1\wdots \widehat{x_{m}^k}\wdots x_{m}^{n-1},x_{m+1}^i) \Big(f(x_1,\ldots, x_{m-1},x_{m}^k)\Big).
\end{align*}
The result, then, follows.
\end{proof}

The homology $H(C(\C{L},V), \delta)$ of the differential complex \eqref{n-Lie-diff-comp} is called the \emph{$n$-Lie algebra cohomology} of $\C{L}$, with coefficients in $V$, and is denoted by $H^\ast(\C{L},V)$. We shall denote by $Z^m(\C{L},V)$ the space of $m$-cocycles, and by $B^m(\C{L},V)$ the space of $m$-coboundaries.

\subsection{$H^2(\C{L},V)$ and abelian extensions}\label{subsect-ab-ext}~

The cohomology theory for $n$-Lie algebras is qualified to classify the abelian extensions of a $n$-Lie algebra with a representation space of it. More precisely \cite[Thm. 5.7]{Zhan14}, see also \cite{LiuMakhShen17}, extends to $n$-Lie algebras as follows.

\begin{proposition}\label{prop-ab-ext}
Given a $n$-Lie algebra $\C{L}$, a representation $(V,\mu)$ of $\C{L}$, and any $f\in C^2(\C{L},V)$, the space $V {}_f\rtimes \C{L} := V\oplus \C{L}$ is a $n$-Lie algebra through
\begin{equation}\label{2-cocycle-ext-for-n-Lie}
[v_1+x_1,\ldots,v_n+x_n] := \sum_{k=1}^{n}\,(-1)^{k+1}\,\mu(x_1,\ldots,\widehat{x}_k,\ldots,x_n)(v_k) + (-1)^{n+1}f(x_1,\ldots,x_n) + (-1)^{n+1}[x_1,\ldots,x_n]
\end{equation}
if and only if $f \in Z^2(\C{L},V)$. 
\end{proposition}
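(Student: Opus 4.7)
The plan is to verify the fundamental identity \eqref{fund-ID-n-Lie} for the bracket \eqref{2-cocycle-ext-for-n-Lie} on $V\oplus \C{L}$ applied to an arbitrary $(2n-1)$-tuple of elements $v_i+x_i$, $1\leq i\leq 2n-1$, and to isolate the precise condition on $f$ that the identity imposes. Antisymmetry of the new bracket is automatic from the $\C{L}_{n-1}\wedge \C{L}$ structure underlying $C^2(\C{L},V)$, together with the alternation built into $\mu$ and into $[\,,\ldots,\,]$ on $\C{L}$, so the fundamental identity is the only point to check.

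I would substitute \eqref{2-cocycle-ext-for-n-Lie} into both sides of
\[
[v_1+x_1,\ldots, v_{n-1}+x_{n-1}, [v_n+x_n,\ldots,v_{2n-1}+x_{2n-1}]] = \sum_{k=0}^{n-1}\,[v_n+x_n, \ldots, [v_1+x_1,\ldots,v_{n-1}+x_{n-1}, v_{n+k}+x_{n+k}],\ldots, v_{2n-1}+x_{2n-1}],
\]
and decompose each resulting term according to whether it lies in $\C{L}$ or in $V$. The $\C{L}$-components reduce, once all $V$-entries are stripped, to the fundamental identity of $\C{L}$ applied to $x_1,\ldots,x_{2n-1}$ and cancel. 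The whole content of the identity then sits in $V$.

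The $V$-terms sort into four species: (a) double-$\mu$ terms $\mu(\ldots)\mu(\ldots)(v_k)$; (b) $\mu$-on-$f$ terms $\mu(\ldots)\bigl(f(\ldots)\bigr)$; (c) $f$-on-bracket terms $f(\ldots,[\,\ldots\,])$; and (d) $\mu$-on-bracket terms $\mu(\ldots,[\,\ldots\,])(v_k)$. The species (a) cancel between the two sides by the representation axiom \eqref{rep-n-Lie-I}; the species (d) collapse, via \eqref{rep-n-Lie-II} and the identity \eqref{property-rep-n-Lie}, into terms of types (b) and (c) (applied to $v$-free arguments). What remains is a pure combination of (b) and (c), which I would compare to the value of the differential \eqref{n-Lie-diff} in degree $m=2$ evaluated at $(x_1\wedge\cdots\wedge x_{n-1},\,x_n\wedge\cdots\wedge x_{2n-2},\,x_{2n-1})$.

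The main obstacle is the sign and index bookkeeping: the four summands in the definition of $\delta$ are indexed differently from the terms produced by expanding the fundamental identity, and one has to align them by repeated use of the alternation of $\mu$ on $\C{L}_{n-1}$ together with \eqref{rep-n-Lie-I}–\eqref{rep-n-Lie-II}. Once the match is made, the fundamental identity for $V {}_f\rtimes \C{L}$ becomes equivalent to $\delta f(x_1,\ldots,x_{2n-1})=0$ for all choices of inputs, that is, to $f\in Z^2(\C{L},V)$, which yields both implications simultaneously.
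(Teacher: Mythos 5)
Your overall strategy is the one the paper follows: expand the fundamental identity \eqref{fund-ID-n-Lie} for the bracket \eqref{2-cocycle-ext-for-n-Lie}, let the $\C{L}$-components cancel by the fundamental identity of $\C{L}$, dispose of the $v$-dependent terms in $V$ by the representation axioms, and recognize the surviving $f$-terms as $\d f(x_1,\ldots,x_{n-1},x_n,\ldots,x_{2n-2},x_{2n-1})$, which gives both implications at once. One step of your sorting is misstated, though: your species (d), the terms $\mu(\ldots,[\ldots,\ldots])(v_k)$, cannot "collapse into terms of types (b) and (c)," since (b) and (c) involve $f$ and no term carrying a $v_k$ can acquire an $f$. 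What actually happens is that (a) and (d) cancel \emph{jointly} between the two sides of the identity: the double-$\mu$ terms on the left pair with the double-$\mu$ and $\mu$-on-bracket terms on the right exactly as prescribed by \eqref{rep-n-Lie-I}, and the $\mu$-on-bracket terms $\mu(x_1,\ldots,\widehat{x}_r,\ldots,[x_n,\ldots,x_{2n-1}])(v_r)$ on the left pair with double-$\mu$ terms on the right via \eqref{rep-n-Lie-II}. In particular the derived identity \eqref{property-rep-n-Lie} is not needed anywhere in this argument; only the two defining axioms \eqref{rep-n-Lie-I} and \eqref{rep-n-Lie-II} enter. With that correction the residue is precisely the combination of $\mu$-on-$f$ and $f$-on-bracket terms constituting $\d f=0$, as you intended.
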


\begin{proof}
Let us begin with the fundamental identity \eqref{fund-ID-n-Lie}, that is,
\begin{align}\label{FI-n-Lie}
\begin{split}
& [v_1+x_1,\ldots, v_{n-1}+x_{n-1}, [v_n+x_n,\ldots,v_{2n-1}+x_{2n-1}]] =\\ 
& \sum_{k=0}^{n-1}\, [v_n+x_n, \ldots,  [v_1+x_1,\ldots,v_{n-1}+x_{n-1}, v_{n+k}+x_{n+k}],\ldots, v_{2n-1}+x_{2n-1}],
\end{split}
\end{align}
where, on the left hand side we have
\begin{align*}
& [v_1+x_1,\ldots, v_{n-1}+x_{n-1}, [v_n+x_n,\ldots,v_{2n-1}+x_{2n-1}]] =\\ 
&  [v_1+x_1,\ldots, v_{n-1}+x_{n-1}, \\
&  \sum_{p=0}^{n-1}  \,(-1)^p\,\mu(x_n,\ldots,\widehat{x_{n+p}},\ldots,x_{2n-1})(v_{n+p}) + (-1)^{n+1} f(x_n,\ldots,x_{2n-1}) + (-1)^{n+1}[x_n,\ldots, x_{2n-1}]] = \\
& \sum_{r=1}^{n-1}  \,(-1)^{r+n}\,\mu(x_1,\ldots, \widehat{x}_r,\ldots,[x_n,\ldots, x_{2n-1}])(v_r)  + \\
& \sum_{p=0}^{n-1}  \,(-1)^{p+n+1}\,\mu(x_1,\ldots, x_{n-1})\mu(x_n,\ldots,\widehat{x_{n+p}},\ldots,x_{2n-1})(v_{n+p}) + \\
& \mu(x_1,\ldots, x_{n-1})\Big(f(x_n,\ldots,x_{2n-1})\Big) +   f(x_1,\ldots,x_{n-1},[x_n,\ldots, x_{2n-1}]) + [x_1,\ldots,x_{n-1},[x_n,\ldots, x_{2n-1}]],
\end{align*}
while on the right hand side
\begin{align*}
& \sum_{k=0}^{n-1}\, [v_n+x_n, \ldots,  [v_1+x_1,\ldots,v_{n-1}+x_{n-1}, v_{n+k}+x_{n+k}],\ldots, v_{2n-1}+x_{2n-1}] = \\
& \sum_{k=0}^{n-1}\, [v_n+x_n, \ldots  \\
& \ldots,\sum_{r=1}^{n-1}\,(-1)^{r+1}\,\mu(x_1,\ldots,\widehat{x}_r,\ldots,x_{n-1},x_{n+k})(v_r) + (-1)^{n+1}\mu(x_1,\ldots,x_{n-1})(v_{n+k}) +\\
& \hspace{3cm}  (-1)^{n+1}f(x_1,\ldots,x_{n-1},x_{n+k}) + (-1)^{n+1}[x_1,\ldots,x_{n-1},x_{n+k}], \ldots \\
& \hspace{2cm} \ldots, v_{2n-1}+x_{2n-1}] = \\
& \underset{p\neq k}{\sum_{p=0}^{n-1}}\sum_{k=0}^{n-1}\,(-1)^{p+n+1}\, \mu(x_n,\ldots,\widehat{x_{n+p}},\ldots,[x_1,\ldots,x_{n-1},x_{n+k}],\ldots,x_{2n-1})(v_{n+p}) + \\
&\sum_{r=1}^{n-1}\sum_{k=0}^{n-1}\,(-1)^{k+r+1}\, \mu(x_n,\ldots,\widehat{x_{n+k}},\ldots,x_{2n-1})\mu(x_1,\ldots,\widehat{x}_r,\ldots,x_{n-1},x_{n+k})(v_r) + \\
& \sum_{k=0}^{n-1}\,(-1)^{k+n+1}\, \mu(x_n,\ldots,\widehat{x_{n+k}},\ldots,x_{2n-1})\mu(x_1,\ldots,\ldots,x_{n-1})(v_{n+k}) + \\
& \sum_{k=0}^{n-1}\,(-1)^{k+n+1}\, \mu(x_n,\ldots,\widehat{x_{n+k}},\ldots,x_{2n-1}) \Big(f(x_1,\ldots,x_{n-1},x_{n+k})\Big) + \\
& \sum_{k=0}^{n-1}\, f(x_n,\ldots,[x_1,\ldots,x_{n-1},x_{n+k}],\ldots,x_{2n-1}) +  \sum_{k=0}^{n-1}\, [x_n,\ldots,[x_1,\ldots,x_{n-1},x_{n+k}],\ldots,x_{2n-1}].
\end{align*}
Accordingly, in view of \eqref{rep-n-Lie-I} and \eqref{rep-n-Lie-II}, the fundamental identity \eqref{FI-n-Lie} holds if and only if 
\begin{align*}
& \mu(x_1,\ldots, x_{n-1})\Big(f(x_n,\ldots,x_{2n-1})\Big) +   f(x_1,\ldots,x_{n-1},[x_n,\ldots, x_{2n-1}]) = \\
& \sum_{k=0}^{n-1}\,(-1)^{k+n+1}\, \mu(x_n,\ldots,\widehat{x_{n+k}},\ldots,x_{2n-1}) \Big(f(x_1,\ldots,x_{n-1},x_{n+k})\Big) + \\
& \sum_{k=0}^{n-1}\, f(x_n,\ldots,[x_1,\ldots,x_{n-1},x_{n+k}],\ldots,x_{2n-1}), 
\end{align*}
which is equivalent to
\[
(\d f)(x_1,\ldots,x_{n-1},x_n,\ldots, x_{2n-2},x_{2n-1}) = 0
\]
for any $x_1,\ldots,x_{2n-1} \in \C{L}$.
\end{proof}

This $n$-Lie algebra $V {}_f\rtimes \C{L} $ is called the \emph{abelian extension} of $\C{L}$ by $V$ via $f\in Z^2(\C{L},V)$.

\begin{proposition}\label{prop-ab-ext-II}
Given a $n$-Lie algebra $\C{L}$, a representation $(V,\mu)$ of $\C{L}$, and $f,g\in Z^2(\C{L},V)$, we have $V {}_f\rtimes \C{L} \cong V {}_g\rtimes \C{L}$ if and only if $f-g \in B^2(\C{L},V)$. 
\end{proposition}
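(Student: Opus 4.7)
The plan is to interpret $V{}_f\rtimes \C{L} \cong V{}_g\rtimes \C{L}$ as an equivalence of abelian extensions, i.e.\ a linear isomorphism $\phi:V\oplus \C{L} \to V\oplus \C{L}$ that restricts to the identity on the subspace $V$ and induces the identity on the quotient $\C{L}$. Any such $\phi$ is automatically of the form $\phi(v+x) = v + \psi(x) + x$ for a unique linear map $\psi:\C{L}\to V$, i.e.\ an element $\psi \in C^1(\C{L},V) = \Hom(\C{L},V)$. The whole proof then reduces to translating the $n$-Lie bracket compatibility $\phi([z_1,\ldots,z_n]_f) = [\phi(z_1),\ldots,\phi(z_n)]_g$ into a coboundary condition on $\psi$.

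For the forward direction, I would substitute $z_i = v_i+x_i$ into the compatibility, use \eqref{2-cocycle-ext-for-n-Lie} on both sides, and observe that all terms linear in the $v_k$'s agree between the two sides and hence cancel. What survives is the relation, on the $V$-component,
\[
(-1)^{n+1}f(x_1,\ldots,x_n) + (-1)^{n+1}\psi([x_1,\ldots,x_n]) = \sum_{k=1}^n (-1)^{k+1}\mu(x_1,\ldots,\widehat{x_k},\ldots,x_n)(\psi(x_k)) + (-1)^{n+1}g(x_1,\ldots,x_n).
\]
Rearranging and multiplying by $(-1)^{n+1}$, this becomes
\[
(f-g)(x_1,\ldots,x_n) = -\psi([x_1,\ldots,x_n]) + \sum_{k=1}^n (-1)^{n+k}\mu(x_1,\ldots,\widehat{x_k},\ldots,x_n)(\psi(x_k)).
\]
Next I would specialise the differential formula \eqref{n-Lie-diff} to $m=1$ and $\psi \in C^1(\C{L},V)$; the first sum is empty and only four terms remain, namely $-\psi([x_1,\ldots,x_{n-1},y]) + \mu(x_1,\ldots,x_{n-1})(\psi(y)) + \sum_{i=1}^{n-1}(-1)^{n+i}\mu(x_1,\ldots,\widehat{x_i},\ldots,x_{n-1},y)(\psi(x_i))$. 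Absorbing the middle term as the $k=n$ summand of the last sum and setting $y=x_n$, I obtain exactly the right-hand side above, so $f-g = \delta\psi \in B^2(\C{L},V)$.

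For the converse, given $\psi \in C^1(\C{L},V)$ with $\delta\psi = f-g$, I would \emph{define} $\phi(v+x) := v + \psi(x) + x$ and reverse the above computation to verify that $\phi$ carries the bracket of $V{}_f\rtimes \C{L}$ onto the bracket of $V{}_g\rtimes \C{L}$, so that $\phi$ is an equivalence of extensions. The main technical obstacle is purely bookkeeping of signs — in particular, matching the overall $(-1)^{n+1}$ attached to the $\C{L}$-component of \eqref{2-cocycle-ext-for-n-Lie} against the signs coming out of the two surviving types of terms in $\delta\psi$, and recognising $\mu(x_1,\ldots,x_{n-1})(\psi(x_n))$ as the $k=n$ term of a symmetrised sum.
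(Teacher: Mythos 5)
Your proposal is correct and takes essentially the same route as the paper: both write the isomorphism in the normalized form $\phi(v+x)=v+\psi(x)+x$ for some $\psi\in C^1(\C{L},V)$, expand the bracket compatibility, cancel the terms linear in the $v_k$, and identify the surviving relation with $f-g=\delta\psi$ via the $m=1$ case of \eqref{n-Lie-diff} (your sign bookkeeping, including recognising $\mu(x_1,\ldots,x_{n-1})(\psi(x_n))$ as the $k=n$ summand, checks out). The only difference is cosmetic: you state explicitly that the isomorphism is taken to be an equivalence of extensions (identity on $V$ and on the quotient), a hypothesis the paper leaves implicit.
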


\begin{proof}
Let $H:V {}_f\rtimes \C{L} \cong V {}_g\rtimes \C{L}$ be the $n$-Lie algebra isomorphism, which may be given by
\[
H(v+x) = v + h(x) + x, 
\]
for some $h\in C^1(\C{L},V)$, where $v\in V$ and $x\in \C{L}$. Then, it follows from
\[
H([v_1+x_1,\ldots,v_n+x_n]) = [H(v_1+x_1),\ldots,H(v_n+x_n)]
\]
that
\begin{align*}
& \sum_{k=1}^n\,(-1)^{k+1}\,\mu(x_1,\ldots,\widehat{x}_k,\ldots,x_n)(v_k) + (-1)^{n+1}f(x_1,\ldots,x_n) + \\
& \hspace{4cm} (-1)^{n+1}h([x_1,\ldots,x_n]) + (-1)^{n+1}[x_1,\ldots,x_n] = \\
& \sum_{k=1}^n\,(-1)^{k+1}\,\mu(x_1,\ldots,\widehat{x}_k,\ldots,x_n)(v_k + h(x_k)) + (-1)^{n+1}g(x_1,\ldots,x_n) +(-1)^{n+1}[x_1,\ldots,x_n], 
\end{align*}
end hence that
\begin{align*}
&  (-1)^{n+1}f(x_1,\ldots,x_n) + (-1)^{n+1}h([x_1,\ldots,x_n]) = \\
& \sum_{k=1}^n\,(-1)^{k+1}\,\mu(x_1,\ldots,\widehat{x}_k,\ldots,x_n)(h(x_k)) + (-1)^{n+1}g(x_1,\ldots,x_n), 
\end{align*}
which is equivalent to 
\[
f(x_1,\ldots,x_n) - g(x_1,\ldots,x_n) = (\d h)(x_1,\ldots,x_n),
\]
for any $x_1,\ldots,x_n \in \C{L}$.
\end{proof}

As a result, we conclude that the abelian extensions of a $n$-Lie algebra $\C{L}$ by $V$ are in one-to-one correspondence with $H^2(\C{L},V)$.

\subsection{$H^2(\C{L},\C{L})$ and infinitesimal deformations}\label{subsect-inf-deform}~

Let us next discuss briefly the infinitesimal deformations of a $n$-Lie algebra. More precisely, we shall upgrade \cite[Thm. 3.1]{Zhan14} to $n$-Lie algebras. 

In accordance with the terminology for Lie algebras, let us define a \emph{deformation} of a $n$-Lie algebra $\C{L}$ to be a smooth map 
\[
h:\C{L}^{\times\,n} \times \B{R}\to \C{L},
\]
so that $h(x_1,\ldots,x_n,0):=[x_1,\ldots,x_n]$ on $\C{L}$, and that $[x_1,\ldots,x_n]_t:=h(x_1,\ldots,x_n,t)$ determines a $n$-Lie algebra structure on $\C{L}$, for any $t\in \B{R}$. Similarly, an \emph{infinitesimal deformation} of $\C{L}$ may be defined to be 
\[
\eta:\C{L}^{\times\,n} \to \C{L}, \qquad \eta(x_1,\ldots,x_n):=\dt h(x_1,\ldots,x_n,t),
\]
which induces, by differentiating, that $\eta:\C{L}_n \to \C{L}$. In other words, $\eta\in C^2(\C{L},\C{L})$. Furthermore, the derivative of the fundamental identity 
\[
[x_1,\ldots, x_{n-1}, [x_n,\ldots,x_{2n-1}]_t]_t = \sum_{k=0}^{n-1}\, [x_n, \ldots,  [x_1,\ldots,x_{n-1}, x_{n+k}]_t,\ldots, x_{2n-1}]_t
\]
yields
\begin{align*}
& \eta(x_1,\ldots, x_{n-1}, [x_n,\ldots,x_{2n-1}]) + [x_1,\ldots, x_{n-1}, \eta(x_n,\ldots,x_{2n-1})] = \\
& \sum_{k=0}^{n-1}\, \eta(x_n, \ldots,  [x_1,\ldots,x_{n-1}, x_{n+k}],\ldots, x_{2n-1}) + \\
& \hspace{3cm} \sum_{k=0}^{n-1}\, [x_n, \ldots,  \eta(x_1,\ldots,x_{n-1}, x_{n+k}),\ldots, x_{2n-1}]
\end{align*}
that is 
\[
(\d\eta)(x_1,\ldots x_{n-1},x_n,\ldots,x_{2n-2},x_{2n-1}) = 0,
\]
or equivalently $\eta\in Z^2(\C{L},\C{L})$.

Let us further call two deformations $h_1:\C{L}^{\times\,n} \times \B{R}\to \C{L}$ and $h_2:\C{L}^{\times\,n} \times \B{R}\to \C{L}$ to be equivalent, if there is $u:\C{L}\times \B{R} \to\C{L}$ such that $u(x,0) = x$, and that
\begin{equation}\label{deform-n-Lie}
u(h_1(x_1,\ldots,x_n,t),t) = h_2(u(x_1,t),\ldots,u(x_n,t),t)
\end{equation}
for any $x_1,\ldots,x_n \in \C{L}$, and any $t\in \B{R}$. Differentiating \eqref{deform-n-Lie} we see that the corresponding infinitesimal deformations
\[
\eta_1(x_1,\ldots,x_n) := \dt h_1(x_1,\ldots,x_n,t), \qquad \eta_2(x_1,\ldots,x_n) := \dt h_2(x_1,\ldots,x_n,t)
\]
satisfy
\begin{align*}
& g([x_1,\ldots,x_n]) + \eta_1(x_1,\ldots,x_n) = \eta_2(x_1,\ldots,x_n) + \sum_{k=1}^n\, [x_1,\ldots,g(x_k),\ldots,x_n],
\end{align*}
that is,
\[
\eta_1(x_1,\ldots,x_{n-1},x_n) - \eta_2(x_1,\ldots,x_{n-1},x_n) = (\d g)(x_1,\ldots,x_{n-1},x_n),
\]
for 
\[
g:\C{L}\to \C{L}, \qquad g(x):=\dt u(x,t).
\]
Equivalently $\eta_1 - \eta_2 \in B^2(\C{L},\C{L})$. We thus observe that the infinitesimal deformations of $\C{L}$ are in one-to-one correspondence with $H^2(\C{L},\C{L})$.

\subsection{$\C{H}^1(\C{L},g\ell(\C{L}))$ and generalized derivation extensions}\label{subsect-gen-der-n-Lie}~

The present subsection accommodates the main result of the paper. Namely, we shall now show that a generalized derivation on a $n$-Lie algebra may be realized as a 1-cocycle, albeit in an unusual $n$-Lie algebra cohomology. Accordingly, we shall begin with the presentation of this alternate cohomology complex for $n$-Lie algebras.

Let $\C{L}$ be a $n$-Lie algebra, and let $(V,\mu)$ be a representation of $\C{L}$. Then, setting $x_i := x_i^1\wdots x_i^{n-1} \in \C{L}_{n-1}$, and $y := y^1\wdots y^{n-2}\in\C{L}_{n-2} $, 
\begin{equation}\label{n-Lie-diff-comp-II}
\C{C}(\C{L},V) := \bigoplus_{m\geq 0} \,\C{C}^m(\C{L},V), \qquad \C{C}^m(\C{L},V):= \Hom\Big(\C{L}_{n-1}^{\wedge\,(m-1)}\wedge \C{L}_{n-2}, V\Big), \quad m\geq 1,
\end{equation}
where $\C{C}^0(\C{L},V):= \C{L}\ot V$, is a differential complex via
\begin{align}\label{n-Lie-diff-II}
\begin{split}
& \delta:\C{C}^0(\C{L},V) \to \C{C}^1(\C{L},V), \qquad \d(z\ot v) (y) := \mu(z,y^1,\ldots,y^{n-2})(v), \\
& \delta:\C{C}^m(\C{L},V) \to \C{C}^{m+1}(\C{L},V), \qquad m\geq 1\\
& \delta f(x_1,\ldots, x_m,y)  := \\
& \sum_{1\leq i < j \leq m}\, (-1)^i\, f(x_1,\ldots,\widehat{x_i},\ldots,  [x_i,x_j], \ldots, x_m,y) +\\
 & \hspace{3cm}  \sum_{i=1}^{m}\, (-1)^i\, f(x_1,\ldots,\widehat{x_i},\ldots,x_m,[x_i,y]) +\\
 &\hspace{3cm} \sum_{i=1}^{m}\, (-1)^{i+1} \,\mu(x_i^1,\ldots,x_i^{n-1})\Big( f(x_1,\ldots,\widehat{x_i},\ldots,x_m,y)\Big) +\\
 & \hspace{3cm}  \sum_{i=1}^{n-1}\,(-1)^{m+i+1}\, \mu(x_m^i,y^1,\ldots,y^{n-2}) \Big(f(x_1,\ldots,x_{m-1}, X_m^i)\Big),
 \end{split}
\end{align}
for any $f\in \C{C}^m(\C{L},V)$, where $X_m^i:= x_m^1\wdots\widehat{x_m^i}\wdots x_m^{n-1} \in \C{L}_{n-2}$,
\[
[x_i,x_j]:= \sum_{k=1}^{n-1}\,x_j^1\wdots [x_i^1,\ldots, x_i^{n-1},x_j^k] \wdots x_j^{n-1} \in \C{L}_{n-1},
\]
and
\[
[x_i,y]:= \sum_{k=1}^{n-2}\,y^1\wdots [x_i^1,\ldots,x_i^{n-1},y^k] \wdots y^{n-2} \in \C{L}_{n-2}.
\]

Towards \eqref{n-Lie-diff-II} being indeed a differential, we first note from Proposition \ref{prop-L-n-2-ot-V-rep-L-n-1} that $\C{L}\ot V$ is a representation through 
\begin{align}
& x\rt (z\ot v) :=  [x^1,\ldots, x^{n-1}, z]\ot  v + z\ot \mu(x^1,\ldots, x^{n-1})(v), \label{rep-Leibniz-n-Lie-I-II}\\
& (z\ot v) \lt x := \sum_{k=1}^{n-1}\,(-1)^{k}\, x^k\wedge \mu(z, X^k)(v), \label{rep-Leibniz-n-Lie-II-II}
\end{align}
where $x:=x^1\wdots x^{n-1}\in \C{L}_{n-1}$, and $X^k:=x^1\wdots \widehat{x^k}\wdots x^{n-1}\in \C{L}_{n-2}$. Then we observe the following analogue of Proposition \ref{prop-D-n-Lie-map-of-complexes}.

\begin{proposition}
Given a $n$-Lie algebra $\C{L}$, together with a representation $(V,\mu)$, the diagram 
\[
\xymatrix{
\C{C}^m(\C{L}, V) \ar[d]_\d \ar[r]^{\Theta^m\,\,\,\,\,\,\,\,\,\,\,\,\,\,\,}  & CL^m(\C{L}_{n-1}, \C{L}\ot V) \ar[d]^d \\
\C{C}^{m+1}(\C{L}, V) \ar[r]^{\Theta^{m+1}\,\,\,\,\,\,\,\,\,\,\,\,\,\,\,} & CL^{m+1}(\C{L}_{n-1}, \C{L}\ot V)
}
\]
is commutative for any $m\geq 0$, where
\begin{align*}
& \Theta^m:\C{C}^m(\C{L},V) \to CL^m(\C{L}_{n-1},\C{L}\ot V), \qquad f \mapsto \Theta^m(f),\\
& \Theta^m(f) (x_1,\ldots,x_m):= \sum_{k=1}^{n-1}\,(-1)^{k+1}\, x_m^k\ot f(x_1,\ldots, x_{m-1},X_m^k), \qquad m\geq 1,\\
& \Theta^0:=-\Id:\C{L}\ot V \to \C{L}\ot V,
\end{align*}
and $X_m^k:=x_m^1\wdots \widehat{x_m^k}\wdots x_m^{n-1}\in \C{L}_{n-2}$
\end{proposition}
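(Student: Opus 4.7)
The plan is to mimic, verbatim in structure, the proof of Proposition \ref{prop-D-n-Lie-map-of-complexes}, only adapting the bookkeeping to the fact that $\Theta^m$ carries the $(n-2)$-fold wedge argument $y = y^1\wdots y^{n-2}$ into an element of $\C{L}$ (rather than $\C{L}_{n-2}$), and correspondingly the coefficient representation is now $\C{L}\ot V$ acted on by \eqref{rep-Leibniz-n-Lie-I-II}--\eqref{rep-Leibniz-n-Lie-II-II} in place of $\C{L}_{n-2}\ot V$.

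First I would dispose of the case $m = 0$: for $z\ot v\in \C{L}\ot V$ and $x\in \C{L}_{n-1}$, the Leibniz differential of $\Theta^0(z\ot v) = -(z\ot v)$ is by \eqref{Leibniz-diff} (with $n = 1$) just $-(z\ot v)\lt x$, which via \eqref{rep-Leibniz-n-Lie-II-II} equals $\sum_{k=1}^{n-1}(-1)^{k+1}\,x^k\ot \mu(z,X^k)(v)$. Comparing with $\Theta^1(\d(z\ot v))(x) = \sum_{k=1}^{n-1}(-1)^{k+1}\,x^k\ot\mu(z,X^k)(v)$ (unwinding the formula for $\d$ on $\C{C}^0$) yields equality.

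For $m\geq 1$ I would expand both sides on $(x_1,\ldots,x_{m+1})$. The left hand side
\[
\Theta^{m+1}(\d f)(x_1,\ldots,x_{m+1}) = \sum_{k=1}^{n-1}\,(-1)^{k+1}\,x_{m+1}^k\ot (\d f)(x_1,\ldots,x_m,X_{m+1}^k)
\]
produces, via the four sums in \eqref{n-Lie-diff-II}, four families of terms indexed by $k$. On the right hand side, $d(\Theta^m f)(x_1,\ldots,x_{m+1})$ splits via \eqref{Leibniz-diff} into the bracket sums, the left-action sum $x_i \rt (\Theta^m f)(\ldots)$, and the right-action term $(\Theta^m f)(x_1,\ldots,x_m)\lt x_{m+1}$. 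Using \eqref{rep-Leibniz-n-Lie-I-II}, the left action splits further into a bracket acting on the $\C{L}$-slot and a $\mu$-action on the $V$-slot; using \eqref{rep-Leibniz-n-Lie-II-II}, the right action yields the $\mu(z,X^k)$-type terms.

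The matching, term by term, proceeds as follows: the bracket-in-$\C{L}_{n-1}$ sums on the $CL$-side with $i<j\leq m$ match the first sum in $\d f$; the sums with $j = m+1$ expand (using the definition of the Leibniz bracket on $\C{L}_{n-1}$ recalled above \eqref{n-Lie-diff-II}) into terms $x_{m+1}^1\wdots[x_i^1,\ldots,x_i^{n-1},x_{m+1}^k]\wdots x_{m+1}^{n-1}$, whose diagonal contribution ($p = k$) reconstructs the second sum $f(\ldots,[x_i,y])$ in $\d f$ together with the off-diagonal pieces coming from the first part of $x_i\rt \Theta^m f(\ldots)$; the $V$-part of $x_i\rt \Theta^m f(\ldots)$ accounts for the third sum $\mu(x_i^1,\ldots,x_i^{n-1})(\cdots)$ in $\d f$; and the right-action term $(\Theta^m f)(\ldots)\lt x_{m+1}$ accounts for the last sum involving $\mu(x_m^i, y^1,\ldots,y^{n-2})$ in $\d f$ (after relabeling indices). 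All signs align by the same argument as in Proposition \ref{prop-D-n-Lie-map-of-complexes}.

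I expect the main obstacle to be the book-keeping of signs in the $j = m+1$ terms, where expansions of $[x_i,x_{m+1}] \in \C{L}_{n-1}$ via the Leibniz bracket on $\C{L}_{n-1}$ create nested double sums that must be split into a ``diagonal'' piece (contributing $[x_i,y]$ in $\C{L}_{n-2}$) and an ``off-diagonal'' piece cancelling against the corresponding off-diagonal contribution of the left action $x_i\rt(\cdot)$. Once one performs the index relabeling $(p,k)\leftrightarrow(k,p)$ in the off-diagonal sums — which is exactly the manipulation carried out in the proof of Proposition \ref{prop-D-n-Lie-map-of-complexes} — the asserted commutativity follows.
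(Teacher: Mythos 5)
Your proposal is correct and follows the paper's own proof essentially verbatim: the paper likewise expands $\Theta^{m+1}(\delta f)$ and $d(\Theta^m f)$ on $(x_1,\ldots,x_{m+1})$, splits the Leibniz bracket sum into the ranges $j\leq m$ and $j=m+1$, and matches the four resulting families of terms using \eqref{rep-Leibniz-n-Lie-I-II} and \eqref{rep-Leibniz-n-Lie-II-II}. One bookkeeping caveat: since $\Theta^m$ places the distinguished factor $x_{m+1}^k$ in the $\C{L}$-tensor slot (rather than in the argument of $f$, as $\D^m$ does), the roles of your ``diagonal'' and ``off-diagonal'' pieces are interchanged relative to Proposition \ref{prop-D-n-Lie-map-of-complexes} --- it is the $p=k$ terms of the expanded bracket $[x_i,x_{m+1}]$ that cancel against the bracket part of $x_i\rt(\cdot)$, while the $p\neq k$ terms assemble into $f(\ldots,[x_i,X_{m+1}^k])$ --- which does not affect the validity of the matching but should be stated the right way round when the computation is written out.
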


\begin{proof}
On the one hand we have
\begin{align*}
& \Theta^{m+1}(\d f) (x_1,\ldots,x_{m+1}) =  \sum_{k=1}^{n-1}\,(-1)^{k+1}\, x_{m+1}^k\ot (\d f)(x_1,\ldots, x_m,X_{m+1}^k) = \\
& \sum_{k=1}^{n-1}\sum_{1\leq i < j\leq m}\,(-1)^{i+k+1}\, x_{m+1}^k\ot f(x_1,\ldots, \widehat{x_i}, \ldots, [x_i,x_j],\ldots, x_m,X_{m+1}^k) + \\
& \sum_{k=1}^{n-1}\sum_{i=1}^{m}\,(-1)^{i+k+1}\, x_{m+1}^k\ot f(x_1,\ldots, \widehat{x_i}, \ldots, x_m,[x_i,X_{m+1}^k]) + \\
& \sum_{k=1}^{n-1}\sum_{i=1}^{m}\,(-1)^{i+k}\, x_{m+1}^k\ot \mu(x_i^1,\ldots,x_i^{n-1})\Big(f(x_1,\ldots, \widehat{x_i}, \ldots, x_m,X_{m+1}^k)\Big) + \\
& \sum_{k=1}^{n-1}\sum_{i=1}^{n-1}\,(-1)^{i+k+m}\, x_{m+1}^k\ot \mu(x_m^i,X_{m+1}^k)\Big(f(x_1,\ldots, x_m,X_{m}^i)\Big),
\end{align*}
while on the other hand,
\begin{align*}
& d(\Theta^{m} (f)) (x_1,\ldots,x_{m+1}) = \\
& \sum_{1\leq i < j \leq m+1}\,(-1)^i\,(\Theta^{m} (f)) (x_1,\ldots,\widehat{x_i}, \ldots, [x_i,x_j],\ldots, x_{m+1}) + \\
& \sum_{i=1}^m\,(-1)^{i+1}\, x_i\rt \Big((\Theta^{m} (f))(x_1,\ldots,\widehat{x_i}, \ldots, x_{m+1})\Big) + (-1)^{m+1} \, \Big((\Theta^{m} (f))(x_1,\ldots, x_{m})\Big) \lt x_{m+1},
\end{align*}
where on the latter, we note that
\begin{align*}
& \sum_{1\leq i < j \leq m+1}\,(-1)^i\,(\Theta^{m} (f)) (x_1,\ldots,\widehat{x_i}, \ldots, [x_i,x_j],\ldots, x_{m+1}) = \\
& \sum_{1\leq i < j \leq m}\,(-1)^i\,(\Theta^{m} (f)) (x_1,\ldots,\widehat{x_i}, \ldots, [x_i,x_j],\ldots, x_{m+1}) + \\
& \sum_{i=1}^m\,(-1)^i\,(\Theta^{m} (f)) (x_1,\ldots,\widehat{x_i},\ldots, [x_i,x_{m+1}]) = \\
& \sum_{k=1}^{n-1}\sum_{1\leq i < j\leq m}\,(-1)^{i+k+1}\, x_{m+1}^k\ot f(x_1,\ldots, \widehat{x_i}, \ldots, [x_i,x_j],\ldots, x_m,X_{m+1}^k) +\\
& \sum_{k=1}^{n-1}\sum_{i=1}^m\,(-1)^{i+k+1}\,x^k_{m+1}\ot f (x_1,\ldots,\widehat{x_i},\ldots, [x_i,X_{m+1}^k]) + \\
& \sum_{k=1}^{n-1}\sum_{i=1}^m\,(-1)^{i+k+1}\,[x_i^1,\ldots, x_i^{n-1},x^k_{m+1}]\ot f (x_1,\ldots,\widehat{x_i},\ldots, X_{m+1}^k).
\end{align*}
The result, now, follows from \eqref{rep-Leibniz-n-Lie-I-II} and \eqref{rep-Leibniz-n-Lie-II-II}. More precisely,
\begin{align*}
& \sum_{i=1}^m\,(-1)^{i+1}\, x_i\rt \Big((\Theta^{m} (f))(x_1,\ldots,\widehat{x_i}, \ldots, x_{m+1})\Big) = \\
& \sum_{k=1}^{n-1}\sum_{i=1}^m\,(-1)^{i+k}\,[x_i^1,\ldots, x_i^{n-1},x^k_{m+1}]\ot f (x_1,\ldots,\widehat{x_i},\ldots, X_{m+1}^k) + \\
& \sum_{k=1}^{n-1}\sum_{i=1}^{m}\,(-1)^{i+k}\, x_{m+1}^k\ot \mu(x_i^1,\ldots,x_i^{n-1})\Big(f(x_1,\ldots, \widehat{x_i}, \ldots, x_m,X_{m+1}^k)\Big),
\end{align*}
and
\begin{align*}
& (-1)^{m+1} \, \Big((\Theta^{m} (f))(x_1,\ldots, x_{m})\Big) \lt x_{m+1} = \\
& (-1)^{m+1} \, \bigg(\sum_{k=1}^{n-1}\,(-1)^{k+1}\,x_m^k\ot f(x_1,\ldots, x_{m-1},X_m^k)\bigg) \lt x_{m+1} \\
& \sum_{k=1}^{n-1}\sum_{i=1}^{n-1}\,(-1)^{i+k+m}\, x_{m+1}^i\ot \mu(x_m^k,X_{m+1}^i)\Big(f(x_1,\ldots, x_m,X_{m}^k)\Big).
\end{align*}
\end{proof}

We shall denote the homology $H(\C{C}(\C{L},V), \delta)$ of the differential complex \eqref{n-Lie-diff-comp-II} by $\C{H}^\ast(\C{L},V)$. As is customary, we shall denote by $\C{Z}^m(\C{L},V)$ the space of $m$-cocycles, and by $\C{B}^m(\C{L},V)$ the space of $m$-coboundaries.

\begin{remark}
It worths to note that the differentials \eqref{n-Lie-diff} and \eqref{n-Lie-diff-II}, and also the differential complexes \eqref{n-Lie-diff-comp} and \eqref{n-Lie-diff-comp-II}, coincide (only) for $n=3$.
\end{remark}

\begin{proposition}\label{prop-gen-der-1-cocycle-n-Lie}
Given a $n$-Lie algebra $\C{L}$, let $D:\C{L}_{n-1}\to \C{L}$, with $D^\sharp:\C{L}_{n-2} \to g\ell(\C{L})$ being the mapping in \eqref{D-sharp-derivation}. Then,
$D^\sharp \in \C{Z}^1(\C{L},g\ell(\C{L}))$ if \eqref{gen-der-n-Lie-II} is satisfied.
\end{proposition}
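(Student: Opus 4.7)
The plan is to verify $\delta D^\sharp(x_1,y)(u)=0$ by direct substitution into the formula \eqref{n-Lie-diff-II} at $m=1$, and then to recognise two applications of the hypothesis \eqref{gen-der-n-Lie-II}. I would write $x_1=a_1\wdots a_{n-1}\in\C{L}_{n-1}$ and $y=b_1\wdots b_{n-2}\in\C{L}_{n-2}$, and package $c_k:=b_k$ for $k\leq n-2$ together with $c_{n-1}:=u$ into a single $(n-1)$-tuple. Using the representation $\mu$ of $\C{L}$ on $g\ell(\C{L})$ from Example \ref{rep-Hom(V,W)} applied with $\eta=\ad$, every term $\mu(\ldots)(D^\sharp(\ldots))(u)$ reduces to $-D(\ldots,[\ldots,u])$, and after collecting the cocycle becomes
\begin{align*}
\delta D^\sharp(x_1,y)(u) = & -\sum_{k=1}^{n-1} D(c_1,\ldots,[a_1,\ldots,a_{n-1},c_k],\ldots,c_{n-1}) \\
& + \sum_{i=1}^{n-1}(-1)^{i+1}\, D(a_1,\ldots,\widehat{a_i},\ldots,a_{n-1},[a_i,c_1,\ldots,c_{n-1}]).
\end{align*}

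The first of these sums is exactly the right hand side of \eqref{gen-der-n-Lie-II} for the partition $(a_1,\ldots,a_{n-1};c_1,\ldots,c_{n-1})$; under the hypothesis it therefore equals $[a_1,\ldots,a_{n-1},D(c_1,\ldots,c_{n-1})] + (-1)^n [D(a_1,\ldots,a_{n-1}),c_1,\ldots,c_{n-1}]$.

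For the second sum I would use the antisymmetry of $D$ to move $[a_i,c_1,\ldots,c_{n-1}]$ from the last slot into the $i$-th slot, gaining a factor $(-1)^{n-1-i}$, and then the antisymmetry of the $n$-ary bracket to rewrite $[a_i,c_1,\ldots,c_{n-1}]=(-1)^{n-1}[c_1,\ldots,c_{n-1},a_i]$. The combined sign $(-1)^{i+1+(n-1-i)+(n-1)}=(-1)^{2n-1}=-1$ turns the second sum into $-\sum_{i=1}^{n-1} D(a_1,\ldots,[c_1,\ldots,c_{n-1},a_i],\ldots,a_{n-1})$, which is the negative of the right hand side of \eqref{gen-der-n-Lie-II} applied this time with the swapped partition $(c_1,\ldots,c_{n-1};a_1,\ldots,a_{n-1})$. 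A final round of $n$-ary antisymmetry, cycling the single $D$-slot between the first and last positions of the bracket for a factor $(-1)^{n-1}$ each time, rewrites the left hand side of that second instance as $(-1)^n [D(a_1,\ldots,a_{n-1}),c_1,\ldots,c_{n-1}] + [a_1,\ldots,a_{n-1},D(c_1,\ldots,c_{n-1})]$, which is exactly the negative of the expression obtained from the first sum. The two contributions cancel and $\delta D^\sharp=0$ follows.

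The main obstacle is the sign bookkeeping in the second sum: one has to compose the antisymmetry of $D$ in its $n-1$ arguments, the antisymmetry of the $n$-ary bracket, the differential signs $(-1)^{i+1}$, and the cyclic-shift signs arising when transporting brackets between the two sides of \eqref{gen-der-n-Lie-II}. Once that accounting is laid out carefully the rest is routine, and no hypothesis beyond \eqref{gen-der-n-Lie-II} is used anywhere in the argument.
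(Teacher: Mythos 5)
Your proposal is correct and follows essentially the same route as the paper's own proof: compute $\delta D^\sharp$ at $m=1$, use the $\Hom(V,W)$-type action of Example \ref{rep-Hom(V,W)} (with $\eta=\ad$) to turn every $\mu(\cdots)(D^\sharp(\cdots))(u)$ into $-D(\ldots,[\ldots,u])$, recognise the two resulting sums as the right-hand side of \eqref{gen-der-n-Lie-II} for the two partitions $(a_1,\ldots,a_{n-1};c_1,\ldots,c_{n-1})$ and $(c_1,\ldots,c_{n-1};a_1,\ldots,a_{n-1})$, and cancel the corresponding left-hand sides by antisymmetry of the $n$-ary bracket. The only wobble is in the narration of the final step: the rewritten left-hand side of the second instance is $(-1)^{n-1}[D(a),c]-[a,D(c)]$, the negative of what you state, but this is exactly compensated by the leading minus sign on the first sum that your prose silently drops, so the displayed cocycle formula, the two invocations of \eqref{gen-der-n-Lie-II}, and the cancellation all stand.
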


\begin{proof}
We have, for any $x:=x^1\wdots x^{n-1}\in \C{L}_{n-1}$, any $y:=y^1\wdots y^{n-2}\in \C{L}_{n-2}$, and any $z\in \C{L}$, 
\begin{align*}
& (\d D^\sharp)(x,y) (z) = - D^\sharp([x,y])(z) + \Big(\mu(x^1,\ldots,x^{n-1})(D^\sharp(y))\Big)(z) + \\
& \sum_{i=1}^{n-1}\,(-1)^{i} \, \Big(\mu(x^i,y)(D^\sharp(X^i))\Big)(z),
\end{align*}
where $X^i:= x^1\wdots \widehat{x^i}\wdots x^{n-1} \in \C{L}_{n-2}$, and
\[
[x,y] := \sum_{k=1}^{n-2}\,y^1\wdots [x^1,\ldots,x^{n-1},y^k]\wdots y^{n-2} \in \C{L}_{n-2}.
\]
Accordingly,
\begin{align*}
& (\d D^\sharp)(x,y) (z) = -D([x,y],z) - D^\sharp(y)([x,z]) + \sum_{i=1}^{n-1}\,(-1)^{i+1} \, D^\sharp(X^i)([x^i,y,z]) = \\
& -D([x,y],z) - D(y,[x,z]) - \sum_{i=1}^{n-1}\, D(x^1,\ldots,[y,z,x^i], \ldots,x^{n-1}).
\end{align*}
Now, if \eqref{gen-der-n-Lie-II} is satisfied, then
\begin{align*}
& (\d D^\sharp)(x,y) (z) = \\
& -\bigg([x,D(y,z)] + (-1)^n[D(x),y,z]\bigg) -   \bigg([y,z,D(x)] + (-1)^n[D(y,z),x]\bigg) = 0.
\end{align*}
\end{proof}

\section{The Hochschild-Serre spectral sequence for $n$-Lie algebras}\label{sect-Hoch-Serr-spect-seq}~

We shall now introduce, along the lines of \cite{HochSerr53}, a spectral sequence associated to a decreasing filtration.

\subsection{The first page relative to a subalgebra}~

Let $\C{L}$ be a $n$-Lie algebra, and let $\C{K}\subseteq \C{L}$ be a $n$-Lie subalgebra. That is, if $\C{L}$ is a $n$-Lie algebra through $[\,,\ldots,\,]:\C{L}_n \to \C{L}$, then $[\,,\ldots,\,]:\C{K}_n \to \C{K}$.

Let us also record that $C^m(\C{L},V)$ is a representation of $\C{L}_{n-1}$, and hence of $\C{K}_{n-1}$, for any $m\geq 0$, and any representation $(V,\mu)$ of $\C{L}$.

\begin{proposition}
Given any $n$-Lie algebra $\C{L}$, and a representation $(V,\mu)$ of $\C{L}$, the space $C^m(\C{L},V)$ $m$-cohains is a representation space for the Leibniz algebra $\C{L}_{n-1}$ for any $m\geq 0$, through
\begin{align}\label{Leibniz-action-on-complex}
\begin{split}
& (z\rt f)(x_1,\ldots,x_{m-1},y) := \\
& z\rt  f(x_1,\ldots,x_{m-1},y) - \sum_{k=1}^{m-1}\, f(x_1,\ldots, [z,x_k],\ldots, x_{m-1},y) - f(x_1,\ldots,x_{m-1},[z,y]), \\
& f\lt z := - z\rt f, 
\end{split}
\end{align}
for any $x_k:=x_k^1\wdots x_k^{n-1}, z:=z^1\wdots z^{n-1} \in \C{L}_{n-1}$, $1\leq k \leq m-1$, and any $y\in \C{L}$, where
\begin{align*}
& z\rt  f(x_1,\ldots,x_{m-1},y):=\mu(z^1,\ldots,z^{n-1})\Big( f(x_1,\ldots,x_{m-1},y)\Big), \\
& [z,x_k] := \sum_{p=1}^{n-1}\,x_k^1 \wdots [z^1,\ldots,z^{n-1},x_k^p] \wdots x_k^{n-1}, \\
& [z,y]:=[z^1,\ldots,z^{n-1},y],
\end{align*}
\end{proposition}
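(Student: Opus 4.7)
The plan is to reduce the three Leibniz representation axioms \eqref{rep-1}--\eqref{rep-3} to a single commutator identity via the antisymmetry $f\lt z=-z\rt f$, and then to verify that identity by rewriting the action in a compact operator form.

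First I would observe that the defining relation $f\lt z=-z\rt f$ automatically implies \eqref{rep-2} and \eqref{rep-3} from \eqref{rep-1}. Indeed, substituting this relation into \eqref{rep-2} and rearranging signs produces $[x,y]\rt v = x\rt(y\rt v)-y\rt(x\rt v)$, which is exactly \eqref{rep-1}; meanwhile both sides of \eqref{rep-3} collapse to $y\rt(x\rt v)$, so it is tautological. Hence the whole proposition reduces to verifying
\[
[z,w]\rt f = z\rt(w\rt f)-w\rt(z\rt f)
\]
for every $z,w\in\C{L}_{n-1}$ and every $f\in C^m(\C{L},V)$, where $[z,w]$ denotes the Leibniz bracket on $\C{L}_{n-1}$.

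Second, I would encode the action \eqref{Leibniz-action-on-complex} as $z\rt f=\mu_z\circ f-f\circ L_z$, where $\mu_z:=\mu(z^1,\ldots,z^{n-1})\in g\ell(V)$ acts on values and $L_z$ is the operator on $\C{L}_{n-1}^{\wedge(m-1)}\wedge\C{L}$ that acts as a derivation, inserting $[z,\cdot]$ (the Leibniz bracket of $\C{L}_{n-1}$) in each $\C{L}_{n-1}$-slot and $[z^1,\ldots,z^{n-1},\cdot]$ in the final $\C{L}$-slot. A routine expansion using this form yields
\[
z\rt(w\rt f)-w\rt(z\rt f) = [\mu_z,\mu_w]\circ f - f\circ [L_z,L_w],
\]
the mixed contributions $\mu_z\circ f\circ L_w$ and $\mu_w\circ f\circ L_z$ cancelling in pairs. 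Matching this with $[z,w]\rt f=\mu_{[z,w]}\circ f-f\circ L_{[z,w]}$ reduces the problem to two commutator identities taken separately: $[\mu_z,\mu_w]=\mu_{[z,w]}$ on $V$, and $[L_z,L_w]=L_{[z,w]}$ on the domain.

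The first identity is precisely the representation axiom \eqref{rep-n-Lie-I} reformulated in the Leibniz bracket on $\C{L}_{n-1}$, since by definition $\mu_{[z,w]}=\sum_p\mu(w^1,\ldots,[z^1,\ldots,z^{n-1},w^p],\ldots,w^{n-1})$. For the second, because $L_z$ is a derivation on the wedge product, cross terms involving two distinct factors cancel in $[L_z,L_w]$, and what remains is the Leibniz commutator acting on each factor separately. On a $\C{L}_{n-1}$-slot this is the left Leibniz identity $[[z,w],x_k]=[z,[w,x_k]]-[w,[z,x_k]]$ of the Leibniz algebra $\C{L}_{n-1}$, and on the final $\C{L}$-slot it is the assertion that $\C{L}$ is a left Leibniz representation of $\C{L}_{n-1}$, an equivalent reformulation of the fundamental identity \eqref{fund-ID-n-Lie} already recorded in \cite[Thm. 2]{daletskii1997leibniz}. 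The main obstacle here is bookkeeping through the derivation-commutator expansion, not any deeper ingredient: the verification ultimately draws only upon \eqref{rep-n-Lie-I} and \eqref{fund-ID-n-Lie}.
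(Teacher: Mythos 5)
Your proof is correct. The two preliminary observations are sound: with $f\lt z:=-z\rt f$ the conditions \eqref{rep-2} and \eqref{rep-3} do reduce to \eqref{rep-1} (respectively become tautological), so only the single identity $[z,w]\rt f=z\rt(w\rt f)-w\rt(z\rt f)$ needs checking; and writing $z\rt f=\mu_z\circ f-f\circ L_z$ with $L_z$ a slot-wise derivation of the domain correctly cancels the mixed terms and isolates the two commutator identities $[\mu_z,\mu_w]=\mu_{[z,w]}$ (which is \eqref{rep-n-Lie-I}) and $[L_z,L_w]=L_{[z,w]}$ (slot by slot, the left Leibniz identity on $\C{L}_{n-1}$ and $\ad([z,w])=[\ad(z),\ad(w)]$ on $\C{L}$, both consequences of \eqref{fund-ID-n-Lie}). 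The paper organizes the verification differently: it argues by induction on $m$, peeling off the first argument via $(z\rt f)(x_1,\ldots,x_{m-1},y)=\big((z\rt f_{x_1})-f_{[z,x_1]}\big)(x_2,\ldots,x_{m-1},y)$ and reducing to one fewer slot, the base case being the $\Hom$-type computation of Example \ref{rep-Hom(V,W)} (cf.\ \eqref{Lie-derivative}). Unwinding that induction produces exactly your slot-by-slot cancellation, so the two arguments rest on the same ingredients; yours has the merit of making explicit that only \eqref{rep-n-Lie-I} and the fundamental identity are needed (and not \eqref{rep-n-Lie-II}), and that the antisymmetric ansatz disposes of two of the three Leibniz-representation axioms for free --- both points the paper leaves implicit.
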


\begin{proof}
Setting
\[
f_{x_1}(x_2,\ldots,x_{m-1},y) := f(x_1,\ldots,x_{m-1},y),
\]
we see at once that
\[
(z\rt f)(x_1,\ldots, x_{m-1},y) = \Big((z\rt f_{x_1}) - f_{[z,x_1]}\Big)(x_2,\ldots,x_{m-1},y).
\]
The claim, then follows from the induction over the dimension of the cohains.
\end{proof}

See also \cite[Sect. 4]{daletskii1997leibniz}, or \eqref{Lie-derivative}.

Following \cite[Sect. 2]{HochSerr53}, let us next introduce the (decreasing) filtration by 
\[
F_jC^{m}(\C{L},V) := C^{m}(\C{L},V), \quad j\leq 0, \qquad F_jC^{m}(\C{L},V) := 0, \quad j\geq m+1,
\]
and for $1\leq j \leq m$, setting $F_jC^{m}(\C{L},V)$ to be those in $C^{m}(\C{L},V)$ that vanish if $(m-j)(n-1)+1$-many arguments are in $\C{K}$. 

Then, it follows at once from \eqref{n-Lie-diff} that
\[
\d F_jC^{m}(\C{L},V) \subseteq F_jC^{m+1}(\C{L},V).
\]
Accordingly, for the associated graded complex we have
\[
F_jC^{i+j}(\C{L},V)/F_{j+1}C^{i+j}(\C{L},V)=:E_0^{j,i} \cong CL^i(\C{K}_{n-1},C^j(\C{L}/\C{K},V)),
\]
on which the differential map \eqref{n-Lie-diff} induces
\[
\d_0: E_0^{j,i} \to E_0^{j,i+1}
\]
that may be identified with \eqref{Leibniz-diff}, regarding $C^j(\C{L}/\C{K},V)$ as  the representation of $\C{K}_{n-1}$ through \eqref{Leibniz-action-on-complex}. Indeed, setting $f_r\in CL^r(\C{L}_{n-1},C^s(\C{L},V))$ by
\[
f_r(x_1,\ldots,x_{r})(x_{r+1},\ldots,x_{r+s-1}) := f(x_1,\ldots, x_{r+s-1},y)
\]
for any $f \in C^{r+s}(\C{L},V)$, we have 
\begin{align*}
& \d f (x_1,\ldots, x_{r+s},y) = \Big((df_{r+1})(x_1,\ldots,x_{r+1})\Big)(x_{r+2},\ldots,x_{r+s},y) + \\
&\hspace{2cm} \sum_{r+2\leq i < j \leq r+s}\, (-1)^i\, f(x_1,\ldots,\widehat{x_i},\ldots,  [x_i,x_j], \ldots, x_{r+s},y) +\\
 & \hspace{1cm}  \sum_{i=r+2}^{r+s}\, (-1)^i\, f(x_1,\ldots,\widehat{x_i},\ldots,x_{r+s},[x_i^1,\ldots,x_i^{n-1},y]) +\\
  & \hspace{1cm}\sum_{i=r+2}^{r+s}\, (-1)^{i+1} \,\mu(x_i^1,\ldots,x_i^{n-1})\Big( f(x_1,\ldots,\widehat{x_i},\ldots,x_{r+s},y)\Big) +\\
 & \sum_{i=1}^{n-1}\,(-1)^{n-1+{r+s}+i}\, \mu(x_{r+s}^1,\ldots,\widehat{x_{r+s}^i},\ldots,x_{r+s}^{n-1},y) \Big(f(x_1,\ldots,x_{{r+s}-1}, x_{r+s}^i)\Big) = \\
 & \Big((df_{r})(x_1,\ldots,x_{r+1})\Big)(x_{r+2},\ldots,x_{r+s},y) + (-1)^{r+1}\, \Big(\d (f_{r+1}(x_1,\ldots,x_{r+1}))\Big)(x_{r+2},\ldots,x_{r+s},y),
\end{align*}
that is,
\begin{equation}\label{delta-d-comp}
(\d f)_{r+1} (x_1,\ldots, x_{r+1}) = (df_{r})(x_1,\ldots,x_{r+1}) + (-1)^{r+1}\,\d (f_{r+1}(x_1,\ldots,x_{r+1})).
\end{equation}
As such,
\begin{equation}\label{1st-page-spect-seq}
E_1^{j,i} \cong HL^i(\C{K}_{n-1},C^j(\C{L}/\C{K},V)).
\end{equation}

We thus conclude the following.

\begin{corollary}\label{coroll-cohom-gen-der-ext}
Given a $n$-Lie algebra $\C{L}$, and a generalized derivation $D:\C{L}_{n-1}\to\C{L}$, let $\C{L}\oplus k$ be the generalized derivation extension of $\C{L}$ by $k$. Then,
\[
H^m(\C{L}\oplus k, V) \cong H^m(\C{L}, V),
\]
for any representation $V$ of $\C{L}\oplus k$, and any $m\geq0$.
\end{corollary}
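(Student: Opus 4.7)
The strategy I would take is to apply the Hochschild-Serre spectral sequence set up earlier in this section to the inclusion $\C{L}\subset \C{L}\oplus k$. The first observation is that this embedding is actually that of an ideal: the bracket formula
\[
[x_1+\alpha_1,\ldots,x_n+\alpha_n] = [x_1,\ldots,x_n] + \sum_{j=1}^n (-1)^{j+1}\,\alpha_j\, D(x_1,\ldots,\widehat{x_j},\ldots,x_n)
\]
takes values entirely in $\C{L}$. Setting $\C{K}=\C{L}$, the resulting quotient $(\C{L}\oplus k)/\C{L}\cong k$ is one-dimensional, carries the trivial $n$-Lie bracket, and the induced action of $\C{L}_{n-1}$ on $k$ is zero.

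Next, I would exploit the one-dimensionality of the quotient to make the first page radically sparse. Since $\wedge^r k=0$ for every $r\geq 2$, the $n$-Lie cochain spaces $C^j(k,W)$ vanish for $j\geq 2$, and the surviving $C^0(k,W)$ and $C^1(k,W)$ amount to at most a single copy of $W$ apiece. Plugging this into \eqref{1st-page-spect-seq}, the $E_1$-page is supported on the narrow strip $0\leq j\leq 1$, so the spectral sequence must collapse after at most one further page by degree reasons.

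It then remains to identify the surviving $E_\infty$-entries with $H^m(\C{L},V)$. The restriction map $C^\bullet(\C{L}\oplus k,V)\to C^\bullet(\C{L},V)$ is a cochain map (because $\C{L}$ is a subalgebra), and this furnishes the edge morphism. The 1-cocycle realization of the generalized derivation given by Proposition \ref{prop-gen-der-1-cocycle-n-Lie} is expected to supply precisely the algebraic identity that organizes the $d_1$-differential between the two surviving columns so that the $j=1$ column is annihilated and the $j=0$ column persists as $H^m(\C{L},V)$, yielding the asserted isomorphism.

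The main obstacle will be the explicit bookkeeping on this $d_1$-differential: one has to unfold its definition in terms of the $n$-Lie differential \eqref{n-Lie-diff} and the Leibniz differential \eqref{Leibniz-diff}, and verify that the generalized-derivation axioms \eqref{gen-der-n-Lie-I}--\eqref{gen-der-n-Lie-III} enforce the cancellations that make the $j=1$ column acyclic in a way compatible with the edge identification. Once that bookkeeping is done, the only nonzero $E_\infty$-entry in total degree $m$ is $E_\infty^{0,m}\cong H^m(\C{L},V)$, and $H^m(\C{L}\oplus k,V)\cong H^m(\C{L},V)$ follows at once.
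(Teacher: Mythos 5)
Your overall strategy---running the Hochschild--Serre spectral sequence of Section \ref{sect-Hoch-Serr-spect-seq} on the extension---is the right one, but you filter with respect to the wrong subalgebra, and this choice leaves the essential identification unproved. The paper takes $\C{K}=k$, the one-dimensional summand, in \eqref{1st-page-spect-seq}. Then $\C{K}_{n-1}=\wedge^{n-1}k=0$, so $CL^i(\C{K}_{n-1},W)$ vanishes for $i\geq 1$ and equals $W$ for $i=0$; hence $E_1^{j,i}=0$ for $i>0$, while $E_1^{j,0}\cong C^j((\C{L}\oplus k)/k,V)=C^j(\C{L},V)$ with the induced differential on the single surviving row being the $n$-Lie differential of $C^{\bullet}(\C{L},V)$. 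The sequence degenerates and the corollary drops out with no further computation.

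Your choice $\C{K}=\C{L}$ produces instead $E_1^{j,i}\cong HL^i(\C{L}_{n-1},C^j(k,V))$. You are right that $C^j(k,V)=0$ for $j\geq 2$ (since $\wedge^{n-1}k=0$), so the page lives in two columns and collapses for degree reasons; but the surviving entries are Leibniz cohomology groups of the \emph{large} Leibniz algebra $\C{L}_{n-1}$, not the $n$-Lie cohomology of $\C{L}$. The only bridge the paper provides between $HL^{\ast}(\C{L}_{n-1},-)$ and $H^{\ast}(\C{L},V)$ is the chain map $\Theta$ (or $\Delta$ of Proposition \ref{prop-D-n-Lie-map-of-complexes}), which is not an isomorphism, so the step you defer as ``bookkeeping''---that the $j=1$ column dies and the $j=0$ column computes $H^m(\C{L},V)$---is precisely the missing content, and the picture you expect is not even right on its face: for $n\geq 4$ one has $C^0(k,V)=\wedge^{n-2}k\otimes V=0$, so your $j=0$ column vanishes identically and only $E_1^{1,i}\cong HL^i(\C{L}_{n-1},\Hom(k,V))$ survives, which would give $H^m(\C{L}\oplus k,V)$ in terms of $HL^{m-1}(\C{L}_{n-1},V)$ rather than $H^m(\C{L},V)$. (It is also doubtful that the identification of $E_0^{j,i}$ underlying \eqref{1st-page-spect-seq} behaves as you assume when $\C{K}$ has codimension one, since the filtration counts arguments lying in $\C{K}$.) Switching to $\C{K}=k$ removes all of these difficulties at once; the generalized-derivation axioms and Proposition \ref{prop-gen-der-1-cocycle-n-Lie} are then not needed in the proof at all, beyond guaranteeing that $\C{L}\oplus k$ is a $n$-Lie algebra in the first place.
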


\begin{proof}
It follows from \eqref{1st-page-spect-seq}, for $k \subseteq \C{L}\oplus k$, that
\[
E_1^{j,i} \cong \begin{cases}
C^j(\C{L},V) & i = 0, \\
0 & i > 0,
\end{cases} 
\]
from which the result follows.
\end{proof}

\subsection{The second page relative to an ideal}~

Let $\C{L}$ be a $n$-Lie algebra, and let $\C{K}\subseteq \C{L}$ be an ideal. That is, if $\C{L}$ is a $n$-Lie algebra through $[\,,\ldots,\,]:\C{L}_n \to \C{L}$, then for $y_1,\ldots, y_n \in \C{L}$ we have $[y_1,\ldots,y_n]\in\C{K}$ if $y_p \in \C{K}$ for some $1\leq p \leq n$. It then follows that $\C{L}/\C{K}$ is a $n$-Lie algebra through
\[
[\overline{y_1}, \ldots, \overline{y_n}] := \overline{[y_1,\ldots,y_n]}.
\]

We next discuss the $\C{L}/\C{K}$ representation on $HL^i(\C{K}_{n-1},V)$.

\begin{proposition}\label{prop-L/K-action}
Let $\C{L}$ be a $n$-Lie algebra, and let $\C{K}\subseteq \C{L}$ be an ideal that commutes with $\C{L}-\C{K}$, that is, for any $z_1,\ldots, z_n\in \C{L}$,
\[
[z_1,\ldots,z_n]=0
\]
unless $z_1\wdots z_n\in \C{K}_{n}$, or $z_1\wdots z_n\in \wedge^n(\C{L}-\C{K})$. Let also $(V,\mu)$ be a representation of $\C{L}$ so that 
\[
\mu(z)(v) = 0
\]
unless $z:=z_1\wdots z_n\in \C{K}_{n}$, or $z:=z_1\wdots z_n\in \wedge^n(\C{L}-\C{K})$. Then, for any $[f]\in HL^i(\C{K}_{n-1},V)$,
\begin{align*}
& \eta(\overline{z})(f) := \eta(z)(f), \\
& \Big(\eta(z)(f)\Big)(x_1,\ldots,x_i) := \mu(z) (f(x_1,\ldots,x_i)) - \sum_{k=1}^i\,f(x_1,\ldots, [z,x_k],\ldots,x_i)
\end{align*}
determines a representation of $\C{L}/\C{K}$ on $HL^i(\C{K}_{n-1},V)$.
\end{proposition}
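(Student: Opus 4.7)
The plan is to establish three things in order: (i) for every $z \in \C{L}_{n-1}$, the operator $\eta(z)$ is a cochain map of the Leibniz complex $CL^\ast(\C{K}_{n-1}, V)$, hence descends to $HL^i$; (ii) $\eta(z)$ acts as zero on $HL^i(\C{K}_{n-1}, V)$ whenever $z$ has a factor in $\C{K}$, so that $\eta$ depends only on $\overline{z} \in \wedge^{n-1}(\C{L}/\C{K})$; and (iii) the induced map $\overline{z} \mapsto \eta(\overline{z})$ satisfies the $n$-Lie representation axioms \eqref{rep-n-Lie-I} and \eqref{rep-n-Lie-II}.

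For (i), I would directly expand $d(\eta(z) f)$ and $\eta(z)(df)$ using the Leibniz differential \eqref{Leibniz-diff} and the definition of $\eta(z)$, and verify term-by-term cancellation. This amounts to a Cartan-type formula $[L_z, d] = 0$ in the present setting; its key ingredients are the representation axioms \eqref{rep-n-Lie-I}--\eqref{rep-n-Lie-II} for $\mu$, together with the fundamental identity \eqref{fund-ID-n-Lie}, which entails that $[z, \cdot\,]$ is a derivation of the Leibniz bracket on $\C{K}_{n-1}$ (using that $\C{K}$ is an ideal). For (ii), write $z = z^1 \wdots z^{n-1}$ and split into two subcases. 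If some $z^j \in \C{K}$ while another $z^l \in \C{L} - \C{K}$, then the commutation hypothesis forces $[z^1,\ldots,z^{n-1},x_k^p] = 0$ for each $x_k^p \in \C{K}$ (as an $n$-fold bracket with elements neither all in $\C{K}$ nor all outside it), so $[z, x_k] = 0$ in $\C{K}_{n-1}$; likewise $\mu(z)$ vanishes by the analogous hypothesis on $\mu$. Thus $\eta(z) = 0$ already at the cochain level. If instead $z \in \C{K}_{n-1}$, then $\eta(z)$ is the inner action of $\C{K}_{n-1}$ on its own Leibniz cochain complex, and I would exhibit the Cartan-type contraction $\iota_z : CL^i(\C{K}_{n-1}, V) \to CL^{i-1}(\C{K}_{n-1}, V)$ defined by $(\iota_z f)(x_1,\ldots,x_{i-1}) := f(z, x_1,\ldots,x_{i-1})$, and verify by direct computation with \eqref{Leibniz-diff} that $\eta(z) = d \iota_z + \iota_z d$.

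For (iii), I would evaluate both sides of \eqref{rep-n-Lie-I} and \eqref{rep-n-Lie-II}, applied to $\eta$, on a representative cochain $f$; after unwinding the definition of $\eta$, the Leibniz bracket on $\C{L}_{n-1}$, and the $n$-Lie bracket on $\C{L}$ (which descends to $\C{L}/\C{K}$ by the ideal hypothesis), and applying the representation axioms for $\mu$ together with the derivation property of $[z, \cdot\,]$ on arguments, the matching reduces to the fundamental identity. The main obstacle will be the subcase $z \in \C{K}_{n-1}$ of Step (ii): the explicit verification of $\eta(z) = d\iota_z + \iota_z d$ requires careful sign and index bookkeeping, because the Leibniz complex is built on tensor powers $\C{K}_{n-1}^{\otimes i}$ rather than exterior powers, so the antisymmetrization shortcuts available for the Lie case do not apply. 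Once this chain homotopy is in hand, the remaining verifications are lengthy but routine.
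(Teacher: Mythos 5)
Your proposal is correct and follows essentially the same route as the paper: the heart of the argument in both is the Cartan-type homotopy formula $\eta(z)(f)=(df)_z+d(f_z)$ for $z\in\C{K}_{n-1}$ (so that $\eta(z)$ kills cohomology classes), combined with the commutation hypotheses on the bracket and on $\mu$ to dispose of the mixed case at the cochain level. The only difference is one of economy: the paper delegates your steps (i) and (iii) to Example \ref{rep-Hom(V,W)} and the earlier proposition on the action \eqref{Leibniz-action-on-complex}, and cites \cite[Coroll.~4]{daletskii1997leibniz} for the homotopy formula rather than verifying it directly.
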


\begin{proof}
It suffices, in view of Example \ref{rep-Hom(V,W)}, to verify that $\C{K}_{n-1}$ annihilates $HL^i(\C{K}_{n-1},V)$, which follows from the observation
\begin{equation}\label{L/K-action-on-jf-II}
\eta(z)(f) =(df)_z + d(f_z) = d(f_z),
\end{equation}
see also \cite[Coroll. 4]{daletskii1997leibniz}, for any $z\in \C{K}_{n-1}$.
\end{proof}

\begin{remark}
Let us note that \eqref{L/K-action-on-jf-II} is automatically satisfied for Lie algebras; as in the case of Lie algebras we just need to verify that $\C{K}$ annihilates $HL^i(\C{K}_{n-1},V)$, for which the assumptions of Proposition \ref{prop-L/K-action} are not needed.
\end{remark}

Next, given any $f\in CL^i(\C{K}_{n-1},C^j(\C{L}/\C{K},V))$ we introduce ${}_jf \in C^j(\C{L}/\C{K},CL^i(\C{K}_{n-1},V))$ by
\[
{}_jf(x_{i+1},\ldots, x_{i+j-1},y)(x_1,\ldots,x_i) := f(x_1,\ldots,x_{i+j-1},y).
\]
Then, rearranging the terms in \eqref{delta-d-comp}, and keeping in mind that $\C{K}\subseteq \C{L}$ is an ideal, for any $f\in CL^i(\C{K}_{n-1},C^j(\C{L}/\C{K},V))$ we observe 
\begin{align*}
& (\d f)(x_1,\ldots,x_i,\overline{x_{i+1}},\ldots, \overline{x_{i+j}},\overline{y}) = \\
& \sum_{1\leq r \leq s \leq i}\,(-1)^r\,f(x_1,\ldots, \widehat{x_r},\ldots, [x_r,x_s],\ldots, x_i,\overline{x_{i+1}},\ldots, \overline{x_{i+j}},\overline{y}) + \\
& \sum_{i+1\leq r \leq s \leq i+j}\,(-1)^r\,f(x_1,\ldots,x_i,\overline{x_{i+1}},\ldots, [\overline{x_{r}},\overline{x_{s}}],\ldots, \overline{x_{i+j}},\overline{y}) + \\
& \sum_{r=i+1}^{i+j}\,(-1)^{r+1}\,\mu(x_r)\Big(f(x_1, \ldots, x_i,\overline{x_{i+1}},\ldots, \widehat{\overline{x_r}},\ldots,\overline{x_{i+j}},\overline{y})\Big) + \\
& \sum_{p=1}^{n-1}\,(-1)^{n-1+i+j+p}\,\mu(x_{i+j}^1,\ldots, \widehat{x_{i+j}^p},\ldots,x_{i+j}^{n-1},y)\Big(f(x_1,\ldots,x_i,\overline{x_{i+1}},\ldots,\overline{x_{i+j-1}},\overline{x_{i+j}^p})\Big) = \\
& (-1)^{(i-1)(n-1)(j(n-1)+1)}\,d({}_{j+1}f(\overline{x_{i+1}},\ldots,\overline{x_{i+j}},\overline{y}))(x_1,\ldots,x_i) + \\
& \hspace{3cm} (-1)^{i(n-1)((j-1)(n-1)+1)+i}\,(\d {}_jf)(\overline{x_{i+1}},\ldots,\overline{x_{i+j}},\overline{y})(x_1,\ldots,x_i),
\end{align*}
that is,
\begin{align*}
& (-1)^{i(n-1)(j(n-1)+1)}\,{}_{j+1}(\d f)(\overline{x_{i+1}},\ldots, \overline{x_{i+j}},\overline{y}) = \\
& \hspace{2cm} (-1)^{(i-1)(n-1)(j(n-1)+1)}\,d({}_{j+1}f(\overline{x_{i+1}},\ldots,\overline{x_{i+j}},\overline{y})) +\\
& \hspace{4cm} (-1)^{i(n-1)((j-1)(n-1)+1)+i}\,(\d {}_{j}f)(\overline{x_{i+1}},\ldots,\overline{x_{i+j}},\overline{y}).
\end{align*}
Accordingly,
\[
\d_1:E_1^{j,i}\to E_1^{j+1,i}
\]
induces
\[
E_2^{j,i} \cong H^j(\C{L}/\C{K},HL^i(\C{K}_{n-1},V)).
\]

\section*{Acknowledgment}

The authors acknowledge the support by T\"UB\.ITAK (the Scientific and Technological Research Council of Turkey) through the project "Matched pairs of Lagrangian and Hamiltonian Systems" with the project number 117F426.

\bibliographystyle{plain}
\bibliography{references}{}

\end{document}